\numberwithin{equation}{section}
\newtheorem{theorem}{Theorem}[section]
\newtheorem{lemma}{Lemma}[section]
\newtheorem{proposition}{Proposition}[section]
\newtheorem{corollary}{Corollary}[section]
\newtheorem{remark}{Remark}[section]
\newtheorem{definition}{Definition}[section]
\theoremstyle{definition}
\newtheorem{example}{Example}
\newtheorem{condition}{Condition}
\begin{document}
\bibliographystyle{amsplain}
\title{{{
Generalized Mittag-Leffler stability of Hilfer fractional impulsive differential systems
}}}
\author{Divya Raghavan
}
\address{
Department of Mathematics,
Indian Institute of Technology, Roorkee-247667,
Uttarakhand, India
}
\email{divyar@ma.iitr.ac.in, madhanginathan@gmail.com}
\author{
Sukavanam Nagarajan
}
\address{
Department of  Mathematics  \\
Indian Institute of Technology, Roorkee-247 667,
Uttarkhand, India
}
\email{n.sukavanam@ma.iitr.ac.in}
\author{
Chengbo Zhai
}
\address{
School of Mathematical Sciences
Shanxi University,
Taiyuan 030006 SHANX
PEOPLES REPUBLIC OF CHINA}
\email{cbzhai@sxu.edu.cn}

\bigskip
\begin{abstract}
This paper establishes integral representations of mild solutions of impulsive Hilfer fractional differential equations with impulsive conditions and fluctuating lower bounds at impulsive points. Further, the paper provides sufficient conditions for generalized Mittag-Leffler stability of a class of impulsive fractional differential systems with Hilfer order. The analysis extends through both, instantaneous and non-instantaneous impulsive conditions. The theory utilizes continuous Lyapunov functions, to ascertain the stability conditions. An example is provided to study the solution of the system with changeable lower bound for the non-instantaneous impulsive conditions.
\end{abstract}
\subjclass[2010]{33E12,93D20,93D05}
\keywords{Impulsive systems; Generalized Mittag-Leffler stability; Hilfer fractional derivative; Lyapunov functions}
\maketitle
%
%
%
%
\section{Introduction}
Many biological happenings sustain perturbations for a period of time. Some of such perturbations may persist for a very short span, may be only at certain points or it may stretch to a finite time interval. In accordance with the duration of perturbation, systems are branched as instantaneous impulsive
system and non-instantaneous impulsive system respectively. The instantaneous impulsive system finds its application in models where the system changes its constraint suddenly. For example, the sudden change of speed/ direction of a moving car or in the change of trajectory of ball bouncing on a hard surface are modeled as instantaneous impulsive systems. For applications of instantaneous impulsive systems, one can refer to \cite{Impulsive-instant-book} by Stamova and Stamov, where, various models involving impulsive conditions are discussed in detail. On the other hand, non-instantaneous impulsive systems gets involved when the perturbations are not negligible. Such a situation was first analyzed by Hern\'{a}ndez\ and\  O'Regan in \cite{non-instant}, where they proposed a new impulsive conditions where the perturbations prolong for a finite interval of time and not just at some fixed moments. Precisely, instead of impulsive points $t_{k}$, they considered the finite time interval $[t_{k},p_{k}]$ where the perturbation occurs. For instance, a differential equation model that reveals the impact of the drug in a body for a certain period of time admits a non-instantaneous impulse. Besides the field of medicine, most of the real-life problems including the study of geographical conditions, to estimate the impact of global warming, in varied field of physics,  involve non-instantaneous impulses. The detailed theory and application regarding non-instantaneous impulses is available in whole lot in the literature, see for instance, the book by
Agarwal\textit{et al}. \cite{Impulsive-non-instant-book}. Numerous research articles constantly emerge that deal with the above two impulsive conditions.

In both the impulsive systems, in particular, the state of the system keeps on varying. Thus, stability is one such property which has to be
addressed, as it determines the stable region of the state of the system. As an example, a power system uses stability analysis to prove if it has the ability to withstand the impact of reasonable fluctuations (or impulses). While discussing the practical-oriented systems, their corresponding models with non-integer order is more productive and it enhances the accuracy that a system needs. For instance, a proportional-integral-derivative controller (PID controller or three-term controller) is a system with control loop employing feedback that is broadly used in industrial control systems and in wide range of other applications that demand continuous modulated control. Instead of classical controller, Podlubny \cite{Frac-PID} considered $PI^{\lambda}D^{\mu}$ controller combining fractional order integrand ($I^{\lambda}$) and fractional order derivative ($D^{\mu}$). An illustration is also provided in his work that proves that $PI^{\lambda}D^{\mu}$ controller works better than the classical PID controller. While working on fractional model, choosing an effective fractional order is vital.

Generalized Riemann-Liouville fractional derivative, termed later as Hilfer fractional order derivative, arose as a theoretical model of dielectric relaxation in glass-forming materials in a work by Hilfer \cite{Hilfer-glass}. The two classical fractional order derivatives Caputo and Riemann Liouville are a particular case of  Hilfer fractional derivative.
The existence and uniqueness of solution of  systems with Hilfer fractional derivative with different
constraints such as an impulsive system with nonlocal conditions was given by Gou and Li \cite{Hilfer-Impulsive-nonlocal}, approximate controllability of impulsive Hilfer fractional system was given by Jiang and Niazi \cite{Hilfer-impulsive} and with delay conditions was given by Ahmed \textit{et al}.\cite{Hilfer-delay}, etc.

The study of stability analysis for non-integer systems was initially given by Podlubny \textit{et al}.\cite{Mittag-frac}. The work of Stamova \cite{Mittag-Impulsive} on impulsive fractional order is also a classical result.  Using the theory of $q$-calculus, Li \textit{et al}. \cite{LDM} studied the $q$- Mittag-Leffler stability of $q$-fractional differential systems. Ren and Zhai \cite{Stability-Ren} studied the stability conditions for generalized fractional derivative along with examples in neural network. Even though stability analysis has been done for Hilfer fractional system by Rezazadeh \textit{et al}.\cite{Hilfer-stability}, Wang \textit{et al}. \cite{G.Mittag-Hilfer}, stability analysis of impulsive differential system with Hilfer fractional derivative has never been studied. The present work studies the generalized Mittag-Leffler stability of a Hilfer fractional differential systems involving both, instantaneous and non-instantaneous impulsive conditions using Lyapunov approach.

The structure of the paper is in the following sequence. Section 2 covers the essential notions that are used in the rest of the paper. In Section 3, mild solution in integral form of  impulsive differential equations with Hilfer fractional derivative with changeable initial conditions are discussed. In Section 4,  both non-instantaneous and instantaneous impulsive systems are outlined and the lemma which is necessary for the stability analysis is proved. Section 5 elaborates the stability analysis for both the impulsive cases separately.

\section{Essential notions}
Let $t_{0}\in \mathbb{R}_{+}=[0,\infty)$ be the initial time. The fractional integral of order $\mu$ is given as \cite[Sec 2.3.2]{Podlubny-book},
\begin{align*}
_{t_{0}}I^{\mu}_{t}g(t)=\dfrac{1}{\Gamma(\mu)}\int^{t}_{t_{0}}(t-s)^{\mu-1}g(s)ds,\enspace t\geq t_{0},\enspace \enspace 0< \mu <1.
\end{align*}
 Here $\Gamma(\cdot)$ is the well known gamma function and $g$ an integrable function. The two classical derivatives Caputo and
 Riemann-Liouville fractional derivative of order $\mu$ are given by \cite[Sec 2.4.1]{Podlubny-book},
 \begin{align*}
^{C}_{t_{0}}D^{\mu}_{t}x(t)=\dfrac{1}{\Gamma(1-\mu)}\int^{t}_{t_{0}}\dfrac{x'(s)}{(t-s)^{\mu}}ds,\enspace t\geq t_{0},\enspace 0< \mu <1,
\end{align*}
and
\begin{align*}
^{RL}_{t_{0}}D^{\mu}_{t}x(t)=\dfrac{1}{\Gamma(1-\mu)}\left(\dfrac{d}{dt}\right)\int^{t}_{t_{0}}\dfrac{x(s)}{(t-s)^{\mu}}ds,\enspace t\geq t_{0},\enspace 0< \mu <1.
\end{align*}
The Hilfer fractional derivative of order $0< \mu <1$ and type $0\leq \nu \leq 1$, of function $x(t)$ is defined by Hilfer \cite{Hilfer-glass} as
\begin{align*}
(_{t_{0}}D^{\mu,\nu}_{t})x(t)=\big({}_{t_{0}}I_{t}^{\nu(1-\mu)}D(_{t_{0}}I_{t}^{(1-\nu)(1-\mu)})\Big)x(t)
\end{align*}
where $D:=\dfrac{d}{dt}$. For the results regarding the existence of solution of systems with Hilfer fractional derivative, the reader can refer the work of Furati \textit{et al}.\cite{Hilfer-exist-1} and Gu and Trujillo \cite{Hilfer-remark}. Riemann-Liouville and Caputo can be considered as a particular case of Hilfer fractional derivative, respectively as
\begin{align*}
_{t_{0}}D_{t}^{\mu,\nu}=
\left\{
  \begin{array}{ll}
   \nu=0\Rightarrow D\,_{t_{0}}I_{t}^{1-\mu}={ }^{RL}_{t_{0}}D_{t}^{\mu} \\
  \nu=1\Rightarrow {}_{t_{0}}I_{t}^{1-\mu}D={ }^{C}_{t_{0}}D^{\mu}_{t}
  \end{array}
\right.
\end{align*}
The parameter $\lambda$ satisfies $\lambda=\mu+\nu-\mu \nu, \enspace 0<\lambda\leqq 1$.
Another important tool that is used is the Laplace transform of fractional order. For the operator $\mathcal{L}$, the Laplace transform is given by
\begin{align*}
\mathcal{L}\{x(t):s\}:=\int_{0}^{\infty}e^{-st}x(t)dt=:X(s), \enspace \Re(s)>0.
\end{align*}
Here, the function $x$ is assumed to be locally integrable on $[0,\infty)$. The Laplace transform with respect to the Hilfer fractional derivative, was given by  Rezazadeh \textit{et al}. in \cite{Hilfer-stability}, as
\begin{align*}
\mathcal{L}[{}_{0}D_{t}^{\mu,\nu}x(t):s]=s^{\mu}\mathcal{L}[x(t)]-s^{\nu(\mu-1)}({}_{0}I_{t}^{(1-\nu)(1-\mu)}x)(0^{+}), \enspace \Re(s)>0.
\end{align*}

The Mittag-Leffler function with one parameter say $\mu$  and two parameters $\mu$ and $\lambda$ are given respectively as, (see \cite{Podlubny-book} for details)
\begin{align*}
  E_{\mu}(z)=\sum_{k=0}^{\infty}\dfrac{z^{k}}{\Gamma(\mu k+1)} \enspace \mbox{and}\enspace E_{\mu,\lambda}(z)=\sum_{k=0}^{\infty}\dfrac{z^{k}}{\Gamma(\mu k+\lambda)},\enspace \mu>0,\enspace\lambda>0, \enspace z \in \mathfrak{C}.
\end{align*}
and for $\lambda=1$, $E_{\mu,1}(z)=E_{\mu}(z)$.

The Laplace transforms with respect to one and two parameter Mittag-Leffler function are given respectively as
\begin{align*}
\mathcal{L}\{E_{\mu}(-\gamma t^{\mu})\}= \dfrac{s^{\mu-1}}{s^{\mu}+\gamma} \enspace \mbox{and} \enspace
\mathcal{L}\{t^{\lambda-1}E_{\mu,\lambda}(-\gamma t^{\mu})\}= \dfrac{s^{\mu-\lambda}}{s^{\mu}+\gamma}, \enspace \gamma\in \mathbb{R}.
\end{align*}

\begin{proposition}\cite{Hilfer-exist-1}
For $\mu \in (0,1)$, $\delta>0$ the following statements are true.
\begin{align*}
{}_{t_{0}}I^{\mu}_{t}(t-t_{0})^{\delta-1}=\dfrac{\Gamma(\delta)}{\Gamma(\delta+\mu)}(t-t_{0})^{\delta+\mu-1}\\
{}_{t_{0}}D^{\mu,\nu}_{t}(t-t_{0})^{\delta-1}=\dfrac{\Gamma(\delta)}{\Gamma(\delta-\mu)}(t-t_{0})^{\delta-\mu-1}
\end{align*}
\end{proposition}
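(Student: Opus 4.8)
The plan is to prove both identities by a direct computation from the definition of the Riemann--Liouville fractional integral, reducing each integral to a Beta-function integral. For the first identity, I would write
\begin{align*}
{}_{t_{0}}I^{\mu}_{t}(t-t_{0})^{\delta-1}=\dfrac{1}{\Gamma(\mu)}\int_{t_{0}}^{t}(t-s)^{\mu-1}(s-t_{0})^{\delta-1}\,ds,
\end{align*}
and then perform the substitution $s=t_{0}+r(t-t_{0})$, $ds=(t-t_{0})\,dr$, with $r$ ranging over $[0,1]$. This turns the integrand factors into $(t-s)^{\mu-1}=(t-t_{0})^{\mu-1}(1-r)^{\mu-1}$ and $(s-t_{0})^{\delta-1}=(t-t_{0})^{\delta-1}r^{\delta-1}$, so the whole expression collapses to $\frac{(t-t_{0})^{\delta+\mu-1}}{\Gamma(\mu)}\int_{0}^{1}(1-r)^{\mu-1}r^{\delta-1}\,dr=\frac{(t-t_{0})^{\delta+\mu-1}}{\Gamma(\mu)}B(\delta,\mu)$, and invoking $B(\delta,\mu)=\Gamma(\delta)\Gamma(\mu)/\Gamma(\delta+\mu)$ gives the claimed formula. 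The convergence of the Beta integral at both endpoints requires $\delta>0$ and $\mu>0$, which is exactly the hypothesis, so there is no subtlety here.

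For the second identity I would unwind the definition of the Hilfer derivative, ${}_{t_{0}}D^{\mu,\nu}_{t}={}_{t_{0}}I_{t}^{\nu(1-\mu)}\,D\,{}_{t_{0}}I_{t}^{(1-\nu)(1-\mu)}$, and apply the first identity twice together with the elementary derivative rule $\frac{d}{dt}(t-t_{0})^{\beta-1}=(\beta-1)(t-t_{0})^{\beta-2}=\frac{\Gamma(\beta)}{\Gamma(\beta-1)}(t-t_{0})^{\beta-2}$. Concretely: applying ${}_{t_{0}}I_{t}^{(1-\nu)(1-\mu)}$ to $(t-t_{0})^{\delta-1}$ yields a constant multiple of $(t-t_{0})^{\delta-1+(1-\nu)(1-\mu)}$; differentiating brings the exponent down by one to $(t-t_{0})^{\delta-2+(1-\nu)(1-\mu)}$ with the appropriate Gamma ratio; finally applying ${}_{t_{0}}I_{t}^{\nu(1-\mu)}$ raises the exponent by $\nu(1-\mu)$, and the net exponent shift is $(1-\nu)(1-\mu)-1+\nu(1-\mu)=(1-\mu)-1=-\mu$, so one lands on $(t-t_{0})^{\delta-\mu-1}$. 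Chaining the three Gamma-function ratios and checking that the intermediate factors telescope gives the coefficient $\Gamma(\delta)/\Gamma(\delta-\mu)$.

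I expect the only genuine obstacle to be bookkeeping in the second part: one must track which exponent parameter plays the role of ``$\delta$'' at each of the two integration steps (they differ, since the middle step changes the exponent), confirm that the positivity conditions needed to apply the first identity hold at each stage, and make sure the Gamma factors coming from the two integrations and the one differentiation multiply to exactly $\Gamma(\delta)/\Gamma(\delta-\mu)$ with everything in between cancelling. A clean way to organize this is to prove a one-line auxiliary fact first, namely ${}_{t_{0}}I^{\mu}_{t}(t-t_{0})^{\delta-1}=\frac{\Gamma(\delta)}{\Gamma(\delta+\mu)}(t-t_{0})^{\delta+\mu-1}$ valid also for the integration orders $(1-\nu)(1-\mu)$ and $\nu(1-\mu)$, and then simply substitute. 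Alternatively, one can bypass the intermediate computation by citing that ${}_{t_{0}}D^{\mu,\nu}_{t}$ coincides with ${}_{t_{0}}I_{t}^{\nu(1-\mu)}\,{}^{RL}_{t_{0}}D^{\mu+\nu-\mu\nu}_{t}$ on power functions, but the direct three-step unwinding is more self-contained and is the route I would take.
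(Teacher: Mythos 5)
The paper offers no proof of this proposition --- it is quoted verbatim from \cite{Hilfer-exist-1} --- so your argument has to stand on its own. The first identity is fine exactly as you describe it: the substitution $s=t_{0}+r(t-t_{0})$ reduces the integral to $B(\delta,\mu)$, which converges under the stated hypotheses $\mu,\delta>0$, and the Gamma--Beta relation gives the coefficient. No issue there.

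For the second identity, however, the step you dismiss as ``bookkeeping'' is where the argument actually breaks, and not merely for notational reasons. After applying ${}_{t_{0}}I_{t}^{(1-\nu)(1-\mu)}$ and differentiating, you are left with a constant multiple of $(t-t_{0})^{\gamma-1}$ where $\gamma=\delta+(1-\nu)(1-\mu)-1$, and the outer integration ${}_{t_{0}}I_{t}^{\nu(1-\mu)}$ (for $\nu>0$) converges only when $\gamma>0$, i.e.\ only when $\delta>1-(1-\nu)(1-\mu)=\mu+\nu-\mu\nu=\lambda$. For $0<\delta<\lambda$ the intermediate integral diverges, so the three-step computation does not go through; and at $\delta=\lambda$ the middle differentiation annihilates a constant, giving ${}_{t_{0}}D^{\mu,\nu}_{t}(t-t_{0})^{\lambda-1}=0$ --- which is precisely the third item of the paper's own Corollary --- whereas the displayed formula would give the nonzero quantity $\tfrac{\Gamma(\lambda)}{\Gamma(\nu(1-\mu))}(t-t_{0})^{\nu(1-\mu)-1}$ whenever $\nu>0$. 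So the second identity, as stated for all $\delta>0$, is not provable by your route (nor, at $\delta=\lambda$ with $\nu>0$, by any route): your telescoping of the Gamma factors is correct, but it establishes the formula only for $\delta>\lambda$ (and for all $\delta>0$ in the Riemann--Liouville case $\nu=0$, where the outer integral is absent). You should either restrict the range of $\delta$ accordingly, or explicitly adopt and justify a regularized reading of the composition; the assertion that the positivity conditions ``hold at each stage'' is, as written, false.
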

\begin{corollary}\cite{Hilfer-exist-1}
Further, for $\lambda \in(0,1)$, $\mu \in (0,1)$, the following statements can be derived from the above proposition.
\begin{flalign*}
&{}_{t_{0}}I_{t}^{\mu}(t-t_{0})^{-\mu}=\Gamma(1-\mu)\\
&{}_{t_{0}}D^{\mu,\nu}_{t}1=\dfrac{1}{\Gamma(1-\mu)}(t-t_{0})^{-\mu}\\
&{}_{t_{0}}D^{\mu,\nu}_{t}(t-t_{0})^{\lambda-1}=0
\end{flalign*}
\end{corollary}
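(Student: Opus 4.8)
The plan is to read all three identities off the preceding Proposition by choosing the exponent parameter $\delta$ appropriately, together with the structural relation $\lambda=\mu+\nu-\mu\nu$, in the form $(1-\nu)(1-\mu)=1-\lambda$, and the elementary gamma-function facts $\Gamma(1)=1$ and $z\Gamma(z)=\Gamma(z+1)$. Two of the three statements are immediate; only the third requires a short detour through the definition of the Hilfer derivative.

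First I would dispose of ${}_{t_{0}}I_{t}^{\mu}(t-t_{0})^{-\mu}=\Gamma(1-\mu)$ by applying the integral formula of the Proposition with $\delta-1=-\mu$, that is $\delta=1-\mu$; this is admissible because $\mu\in(0,1)$ forces $\delta>0$. Since then $\delta+\mu=1$, the right-hand side collapses to $\frac{\Gamma(1-\mu)}{\Gamma(1)}(t-t_{0})^{0}=\Gamma(1-\mu)$. Next, for ${}_{t_{0}}D_{t}^{\mu,\nu}1=\frac{1}{\Gamma(1-\mu)}(t-t_{0})^{-\mu}$ I would write the constant function as $1=(t-t_{0})^{\delta-1}$ with $\delta=1$ and feed it into the derivative formula of the Proposition; since $\lambda<1$ we have $\delta=1>\lambda$, so we are away from the ``constant-like'' exponent and the formula applies directly, producing $\frac{\Gamma(1)}{\Gamma(1-\mu)}(t-t_{0})^{1-\mu-1}=\frac{1}{\Gamma(1-\mu)}(t-t_{0})^{-\mu}$.

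The remaining identity ${}_{t_{0}}D_{t}^{\mu,\nu}(t-t_{0})^{\lambda-1}=0$ is the one point where I would not substitute $\delta=\lambda$ blindly into the derivative formula: the exponent $\lambda-1$ is precisely the one for which $(t-t_{0})^{\lambda-1}$ plays, relative to ${}_{t_{0}}D_{t}^{\mu,\nu}$, the role that genuine constants play for the Caputo derivative, and a formal substitution there is of $0\cdot\infty$ type and can be misleading. Instead I would return to the definition ${}_{t_{0}}D_{t}^{\mu,\nu}x={}_{t_{0}}I_{t}^{\nu(1-\mu)}\,D\,{}_{t_{0}}I_{t}^{(1-\nu)(1-\mu)}x$. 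The inner integral is handled by the integral formula of the Proposition applied with $\delta=\lambda$ and order $(1-\nu)(1-\mu)$, and because $\lambda+(1-\nu)(1-\mu)=1$ its output is the constant $\frac{\Gamma(\lambda)}{\Gamma(1)}(t-t_{0})^{0}=\Gamma(\lambda)$. Applying $D$ annihilates this constant, and ${}_{t_{0}}I_{t}^{\nu(1-\mu)}$ of the zero function is again zero, whence ${}_{t_{0}}D_{t}^{\mu,\nu}(t-t_{0})^{\lambda-1}=0$.

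I do not anticipate any genuine obstacle: the whole statement is bookkeeping with the gamma-function identities and with $(1-\nu)(1-\mu)=1-\lambda$. The only item worth flagging is the third identity, where the clean and safe route is through the definition, as above, rather than a formal $\delta=\lambda$ substitution into the derivative formula of the Proposition.
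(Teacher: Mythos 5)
Your derivation is correct. The paper states this corollary with only a citation to Furati et al.\ and gives no proof of its own, so there is nothing to compare against; your argument is exactly the derivation the wording ``can be derived from the above proposition'' invites: $\delta=1-\mu$ and $\delta=1$ in the two formulas of the Proposition handle the first two identities, and your sanity check that $\delta=1\neq\lambda$ (guaranteed by $\lambda<1$, i.e.\ $\nu\neq 1$) is the right one, since the intermediate exponent in the Hilfer composition is $\delta+(1-\nu)(1-\mu)-1=\delta-\lambda$. You are also right to single out the third identity: a blind substitution $\delta=\lambda$ into the derivative formula would return $\frac{\Gamma(\lambda)}{\Gamma(\nu(1-\mu))}(t-t_{0})^{\nu(1-\mu)-1}$, which is nonzero for $\nu>0$, so the generic formula genuinely fails there and the correct route is the one you take — the inner integral ${}_{t_{0}}I_{t}^{(1-\nu)(1-\mu)}(t-t_{0})^{\lambda-1}=\Gamma(\lambda)$ is constant, $D$ kills it, and the outer integral of zero is zero.
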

Consider the nonlinear Hilfer fractional differential equation
\begin{align}
\label{eqn:stability-general}
{}_{a}D^{\mu,\nu}_{t}x(t)=G(t,x(t)),\enspace t>a,\enspace 0<\mu<1,\enspace 0\leq \nu \leq 1.
\end{align}
The two commonly used initial conditions of \eqref{eqn:stability-general} are as follows:
\begin{enumerate}
\item
Initial condition in integral form:
\begin{align}
\label{stability-integral}
{}_{a}I^{1-\lambda}_{t}x(t)|_{t=a}=B\in \mathbb{R}.
\end{align}
\item
Initial condition in weighted Cauchy type problem:
\begin{align}
\label{stability-weighted}
\lim_{t\rightarrow a}\left((t-a)^{1-\lambda}x(t)\right)=C\in \mathbb{R}.
\end{align}
\end{enumerate}
\begin{remark}
\label{rem:Initial}
In accordance with \cite[Lemma 3.2]{Kilbas-book}, if any function $x(t)$ satisfies the initial condition \eqref{stability-integral}, then $x(t)$ also satisfies the condition \eqref{stability-weighted} with $B=C\Gamma(\lambda)$. Also \eqref{eqn:stability-general} with the above two forms of initial conditions will have an equivalent integral representations.
\end{remark}
 In this paper, the initial condition in weighted form \eqref{stability-weighted} is taken into consideration and the following lemma gives the integral representation for the weighted form of initial condition.
 Let $C[a,b]$ and $C^{n}[a,b]$ be the space of continuous functions and n times continuously differentiable functions on $[a,b]$, respectively. In general, the weighted space of continuous functions are given by
 \begin{align*}
 C_{1-\lambda}[a,b]&=\{g:(a,b]\rightarrow \mathbb{R}:(x-a)^{1-\lambda}g(x)\in C[a,b]\},\enspace 0< \lambda \leq 1,\\
  C_{1-\lambda}^{\lambda}[a,b]&=\{g\in C_{1-\lambda}[a,b],\enspace D^{\lambda}_{a^{+}}g\in C_{1-\lambda}[a,b]\}.
 \end{align*}
 \begin{lemma}\cite{Hilfer-exist-1}
 Let $\lambda=\mu+\nu-\mu\nu$ where $0<\mu<1$ and $0\leq \nu \leq 1$. Let $G:(a,b]\times \mathbb{R}\rightarrow \mathbb{R}$ be a function such that $G(\cdot,x(\cdot))\in C_{1-\lambda}[a,b]$ for any $x\in C^{\lambda}_{1-\lambda}[a,b]$, then $x(t)$ satisfies \eqref{eqn:stability-general}-\eqref{stability-weighted} if and only if $x$ satisfies
 \begin{align*}
 x(t)=C(t-a)^{\lambda-1}+\dfrac{1}{\Gamma(\mu)}\int^{t}_{a}\dfrac{G(s,x(s))}{(t-s)^{1-\mu}}ds,\enspace t\in (a,b].
 \end{align*}
 \end{lemma}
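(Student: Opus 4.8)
The plan is to prove the equivalence by establishing both implications, relying on the two composition identities of Proposition~2.1 and Corollary~2.1 together with the weighted‑space hypotheses that guarantee all the operators act on legitimate function classes.

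First I would prove the forward direction ($\Rightarrow$): assume $x\in C^{\lambda}_{1-\lambda}[a,b]$ solves $_{a}D^{\mu,\nu}_{t}x(t)=G(t,x(t))$ subject to $\lim_{t\to a}(t-a)^{1-\lambda}x(t)=C$. Write the Hilfer derivative in its defining factored form $_{a}D^{\mu,\nu}_{t}=\,_{a}I_{t}^{\nu(1-\mu)}\,D\,_{a}I_{t}^{(1-\nu)(1-\mu)}$, and apply the fractional integral $_{a}I_{t}^{\mu}$ to both sides of the equation. On the right I get the integral term $\frac{1}{\Gamma(\mu)}\int_a^t (t-s)^{\mu-1}G(s,x(s))\,ds$ directly. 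On the left I use the semigroup law for Riemann--Liouville integrals, $_{a}I_{t}^{\mu}\,_{a}I_{t}^{\nu(1-\mu)}=\,_{a}I_{t}^{\mu+\nu(1-\mu)}=\,_{a}I_{t}^{\lambda-(1-\nu)(1-\mu)}$, to collapse the composition and recover $x(t)$ minus the contribution of the "initial value" of the fractional primitive; concretely one obtains $x(t)=\frac{(\,_{a}I_{t}^{(1-\nu)(1-\mu)}x)(a^{+})}{\Gamma(\lambda)}(t-a)^{\lambda-1}+\,_{a}I_{t}^{\mu}G(t,x(t))$. Here I would invoke Remark~2.1 (citing \cite[Lemma 3.2]{Kilbas-book}), which converts the weighted limit condition with constant $C$ into the integral‑form initial value $B=C\Gamma(\lambda)$, so that $(\,_{a}I_{t}^{(1-\nu)(1-\mu)}x)(a^{+})=B=C\Gamma(\lambda)$ and the coefficient of $(t-a)^{\lambda-1}$ is exactly $C$. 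This yields the claimed representation.

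For the converse ($\Leftarrow$), I would start from $x(t)=C(t-a)^{\lambda-1}+\,_{a}I_{t}^{\mu}G(t,x(t))$ and apply $_{a}D^{\mu,\nu}_{t}$ termwise. The third identity of Corollary~2.1 gives $_{a}D^{\mu,\nu}_{t}(t-a)^{\lambda-1}=0$, so that term drops out. For the second term I need $_{a}D^{\mu,\nu}_{t}\,_{a}I_{t}^{\mu}G=G$; this follows from expanding the Hilfer operator, using $_{a}I_{t}^{(1-\nu)(1-\mu)}\,_{a}I_{t}^{\mu}=\,_{a}I_{t}^{(1-\nu)(1-\mu)+\mu}$, differentiating, and applying the semigroup law once more, with the membership $G(\cdot,x(\cdot))\in C_{1-\lambda}[a,b]$ ensuring the intermediate integrals are absolutely continuous so that $D\,_{a}I_{t}^{1}=\mathrm{id}$ applies. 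To check the initial condition I multiply the representation by $(t-a)^{1-\lambda}$ and let $t\to a$: the first term contributes $C$, while $(t-a)^{1-\lambda}\,_{a}I_{t}^{\mu}G(t,x(t))\to 0$ because, using $|G(s,x(s))|\le M(s-a)^{\lambda-1}$ near $a$ and Proposition~2.1 (with $\delta=\lambda$), $(t-a)^{1-\lambda}\,_{a}I_{t}^{\mu}\big((t-a)^{\lambda-1}\big)=\frac{\Gamma(\lambda)}{\Gamma(\lambda+\mu)}(t-a)^{\mu}\to 0$.

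The main obstacle is the careful bookkeeping of the composition (semigroup) identities for Riemann--Liouville operators of fractional orders that do not individually exceed $1$ but whose sums straddle $1$, and in particular justifying the step $D\,_{a}I_{t}^{(1-\nu)(1-\mu)+\mu}x=\,_{a}D^{\mu,\nu}_{t}x$‑type manipulations on the weighted space $C^{\lambda}_{1-\lambda}[a,b]$ rather than on $C^{1}$: one must verify that $_{a}I_{t}^{1-\lambda}x$ is absolutely continuous and that the boundary term extracted in the forward direction is genuinely $(\,_{a}I_{t}^{(1-\nu)(1-\mu)}x)(a^{+})$ and not merely a formal symbol. Since this is precisely the content of \cite{Hilfer-exist-1} (Furati \emph{et al}.), I would cite that reference for the delicate regularity facts and keep the exposition at the level of the operator identities above.
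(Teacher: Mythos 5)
The paper offers no proof of this lemma---it is quoted verbatim from Furati \emph{et al.} \cite{Hilfer-exist-1}---and your argument is essentially the proof given in that reference: the forward direction via applying ${}_{a}I_{t}^{\mu}$ and the inversion identity recorded in the paper as Lemma~\ref{lem:4-stability}(2), with the weighted initial condition converted to the integral one through $B=C\Gamma(\lambda)$, and the converse via ${}_{a}D^{\mu,\nu}_{t}(t-a)^{\lambda-1}=0$ together with Lemma~\ref{lem:4-stability}(1), so the approach is correct and the same as the source's. One minor slip: in the chain ${}_{a}I_{t}^{\mu}{}_{a}I_{t}^{\nu(1-\mu)}={}_{a}I_{t}^{\mu+\nu(1-\mu)}={}_{a}I_{t}^{\lambda-(1-\nu)(1-\mu)}$ the last exponent equals $2\lambda-1$, not $\mu+\nu(1-\mu)=\lambda$; this is only a typo, since the displayed representation you then extract (with coefficient $({}_{a}I_{t}^{(1-\nu)(1-\mu)}x)(a^{+})/\Gamma(\lambda)=C$) is the correct one.
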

The following lemma is on the uniqueness of the solution of the Cauchy type problem \eqref{eqn:stability-general}-\eqref{stability-weighted} in the space $C^{\lambda}_{1-\lambda}[a,b]$. Also, the global existence for the weighted Cauchy type problem for Hilfer fractional differential equation is proved by Furati \cite{Hilfer-exist-1}.
\begin{lemma}\cite{Hilfer-exist-1}
Let $0<\mu<1$, $0\leq \nu \leq 1$ and $\lambda=\mu+\nu-\mu\nu$. Let $G:(a,b]\times \mathbb{R}\rightarrow \mathbb{R}$ be a function such that $G(\cdot,x(\cdot))\in C^{\lambda}_{1-\lambda}[a,b]$ for any $x\in C_{1-\lambda}[a,b]$ and satisfy the Lipschitz condition such that $L>0$ exists with $|G(t,x)-G(t,y)|\leq L|x-y|$ . Then there exists an unique solution to the initial value problem \eqref{eqn:stability-general}-\eqref{stability-weighted} in the space $C^{\lambda}_{1-\lambda}[a,b]$.
\end{lemma}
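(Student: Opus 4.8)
The plan is to convert the weighted Cauchy problem \eqref{eqn:stability-general}--\eqref{stability-weighted} into a fixed point equation by means of the preceding lemma and then run a contraction argument in the weighted Banach space $C_{1-\lambda}[a,b]$, equipped with the norm $\|g\|_{1-\lambda}=\sup_{t\in[a,b]}(t-a)^{1-\lambda}|g(t)|$. By that lemma, $x\in C^{\lambda}_{1-\lambda}[a,b]$ solves \eqref{eqn:stability-general}--\eqref{stability-weighted} if and only if it is a fixed point of the operator
\begin{align*}
(Tx)(t)=C(t-a)^{\lambda-1}+\frac{1}{\Gamma(\mu)}\int_{a}^{t}(t-s)^{\mu-1}G(s,x(s))\,ds,\qquad t\in(a,b].
\end{align*}
First I would check that $T$ maps $C_{1-\lambda}[a,b]$ into itself: writing $G(s,x(s))=(s-a)^{\lambda-1}h(s)$ with $h\in C[a,b]$ (possible since $G(\cdot,x(\cdot))\in C^{\lambda}_{1-\lambda}[a,b]\subset C_{1-\lambda}[a,b]$), the identity ${}_{a}I^{\mu}_{t}(t-a)^{\lambda-1}=\frac{\Gamma(\lambda)}{\Gamma(\lambda+\mu)}(t-a)^{\lambda+\mu-1}$ from the Proposition shows that the integral term, after multiplication by $(t-a)^{1-\lambda}$, is continuous on $[a,b]$ and of order $O((t-a)^{\mu})$, so $Tx\in C_{1-\lambda}[a,b]$.

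The core of the argument is an iterated estimate. For $x,y\in C_{1-\lambda}[a,b]$ the Lipschitz hypothesis gives $|G(s,x(s))-G(s,y(s))|\le L(s-a)^{\lambda-1}\|x-y\|_{1-\lambda}$; substituting this into $Tx-Ty$ and evaluating the Beta-type integral $\int_{a}^{t}(t-s)^{\mu-1}(s-a)^{\lambda-1}\,ds=\frac{\Gamma(\mu)\Gamma(\lambda)}{\Gamma(\mu+\lambda)}(t-a)^{\mu+\lambda-1}$ yields
\begin{align*}
(t-a)^{1-\lambda}|(Tx)(t)-(Ty)(t)|\le\frac{L\,\Gamma(\lambda)}{\Gamma(\mu+\lambda)}(t-a)^{\mu}\,\|x-y\|_{1-\lambda}.
\end{align*}
Feeding this bound back into $T$ and applying the Beta integral again, an induction on $n$ should produce
\begin{align*}
(t-a)^{1-\lambda}|(T^{n}x)(t)-(T^{n}y)(t)|\le\frac{L^{n}\,\Gamma(\lambda)}{\Gamma(n\mu+\lambda)}(t-a)^{n\mu}\,\|x-y\|_{1-\lambda}\le\frac{\Gamma(\lambda)\,(L(b-a)^{\mu})^{n}}{\Gamma(n\mu+\lambda)}\,\|x-y\|_{1-\lambda}.
\end{align*}
Since $\sum_{n\ge0}\frac{(L(b-a)^{\mu})^{n}}{\Gamma(n\mu+\lambda)}=E_{\mu,\lambda}\!\big(L(b-a)^{\mu}\big)<\infty$, the coefficients tend to $0$, hence $T^{n_{0}}$ is a strict contraction on the complete space $C_{1-\lambda}[a,b]$ for some $n_{0}$; by Weissinger's fixed point theorem (equivalently, by the elementary fact that a self-map with a contractive iterate has a unique fixed point) $T$ has a unique fixed point $x^{*}\in C_{1-\lambda}[a,b]$.

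Finally I would promote $x^{*}$ to $C^{\lambda}_{1-\lambda}[a,b]$ and close the equivalence: from the fixed point relation $x^{*}(t)-C(t-a)^{\lambda-1}={}_{a}I^{\mu}_{t}G(t,x^{*}(t))$ with $G(\cdot,x^{*}(\cdot))\in C_{1-\lambda}[a,b]$, together with the composition rules of the Proposition and Corollary (in particular ${}_{a}D^{\mu,\nu}_{t}(t-a)^{\lambda-1}=0$ and the action of the Riemann--Liouville operator on a fractional integral), one obtains $D^{\lambda}_{a^{+}}x^{*}\in C_{1-\lambda}[a,b]$, so $x^{*}\in C^{\lambda}_{1-\lambda}[a,b]$. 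The preceding lemma then identifies $x^{*}$ as a solution of \eqref{eqn:stability-general}--\eqref{stability-weighted}, while any other solution in that space is again a fixed point of $T$ and hence coincides with $x^{*}$, giving existence and uniqueness. The step I expect to be the main obstacle is the induction yielding the factor $\Gamma(n\mu+\lambda)$ in the denominator: one must track the weight $(t-a)^{1-\lambda}$ and the nested Beta integrals carefully so that the Gamma functions telescope correctly; the secondary difficulty is the regularity bookkeeping needed to land in $C^{\lambda}_{1-\lambda}[a,b]$ rather than merely in $C_{1-\lambda}[a,b]$, whereas the Lipschitz estimate and the completeness of the weighted space are routine.
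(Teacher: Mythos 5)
The paper does not actually prove this lemma; it is imported verbatim from Furati--Kassim--Tatar \cite{Hilfer-exist-1}, so there is no in-paper argument to compare against. Your proposal is correct and follows the same overall route as that source: reduce \eqref{eqn:stability-general}--\eqref{stability-weighted} to the weighted Volterra equation via the preceding equivalence lemma, and run a fixed-point argument for $Tx = C(t-a)^{\lambda-1} + {}_{a}I^{\mu}_{t}G(\cdot,x(\cdot))$ in $\bigl(C_{1-\lambda}[a,b],\ \|\cdot\|_{1-\lambda}\bigr)$. The one genuine difference is tactical: the cited proof (in the style of Kilbas--Srivastava--Trujillo) makes $T$ a strict contraction directly by first shrinking to a subinterval $[a,x_{1}]$ with $\tfrac{L\,\Gamma(\lambda)}{\Gamma(\lambda+\mu)}(x_{1}-a)^{\mu}<1$ and then continuing the solution interval by interval, whereas you iterate the Beta-integral estimate to get the coefficient $\Gamma(\lambda)\,(L(b-a)^{\mu})^{n}/\Gamma(n\mu+\lambda)$ and invoke Weissinger; your telescoping of the Gamma factors is right, and this buys a one-shot argument on all of $[a,b]$ with the Mittag-Leffler series making the contraction of some iterate transparent. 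Two points deserve the care you already flag: (i) the fixed point is produced in $C_{1-\lambda}[a,b]$, but the equivalence lemma is stated for $x\in C^{\lambda}_{1-\lambda}[a,b]$, so the regularity upgrade is not optional --- it is exactly here that the hypothesis $G(\cdot,x(\cdot))\in C^{\lambda}_{1-\lambda}[a,b]$ (rather than merely $C_{1-\lambda}$) is consumed, since $D^{\lambda}_{a^{+}}x^{*}$ is computed from the equation itself and one needs $D^{\lambda}_{a^{+}}$ of the forcing term to land back in $C_{1-\lambda}[a,b]$; and (ii) the continuity of $(t-a)^{1-\lambda}(Tx)(t)$ up to $t=a$ needs the uniform bound $O((t-a)^{\mu})$ you state, which indeed follows from the Beta identity. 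With those two steps written out, the argument is complete and faithful to the cited reference.
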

\begin{lemma} \cite{Hilfer-exist-1}
\label{lem:4-stability}
\begin{enumerate}[\rm(1)]
\item
If $x\in C(t_{0},T]$, then for any point $t\in (t_{0},T]$
\begin{align*}
{}_{t_{0}}D^{\mu,\nu}_{t}({}_{t_{0}}I^{\mu}_{t}x(t))=x(t).
\end{align*}
\item
If $x\in C(t_{0},T]$ and ${}_{t_{0}}I^{\mu}_{t}x(t)\in C(t_{0},T]$, then for any point $t\in (t_{0},T]$,
\begin{align*}
{}_{t_{0}}I^{\mu}_{t}({}_{t_{0}}D^{\mu,\nu}_{t}x(t))=x(t)-\dfrac{(t-t_{0})^{\lambda-1}}{\Gamma(\lambda)}{}_{t_{0}}I^{(1-\mu)(1-\mu)}_{t}x(t)|_{t=t_{0}}.
\end{align*}
\end{enumerate}
\end{lemma}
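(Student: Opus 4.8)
The plan is to reduce both identities to the defining representation of the Hilfer derivative,
\[
{}_{t_{0}}D^{\mu,\nu}_{t}={}_{t_{0}}I_{t}^{\nu(1-\mu)}\,D\,{}_{t_{0}}I_{t}^{(1-\nu)(1-\mu)},\qquad D=\tfrac{d}{dt},
\]
together with three elementary facts about the Riemann--Liouville operators: the index law ${}_{t_{0}}I_{t}^{\alpha}\,{}_{t_{0}}I_{t}^{\beta}g={}_{t_{0}}I_{t}^{\alpha+\beta}g$ for $\alpha,\beta>0$ (Fubini, valid when $g$ is continuous on $(t_{0},T]$ and integrable near $t_{0}$); the identity $D\,{}_{t_{0}}I_{t}^{1-\alpha}g={}^{RL}_{t_{0}}D^{\alpha}_{t}g$, which is merely the definition of the Riemann--Liouville derivative recalled above; and the left--inverse relation
\[
{}_{t_{0}}I_{t}^{\alpha}\big({}^{RL}_{t_{0}}D^{\alpha}_{t}g\big)(t)=g(t)-\frac{(t-t_{0})^{\alpha-1}}{\Gamma(\alpha)}\,\lim_{t\to t_{0}^{+}}\big({}_{t_{0}}I_{t}^{1-\alpha}g\big)(t),\qquad 0<\alpha\leq1,
\]
with ${}_{t_{0}}I^{0}_{t}$ read as the identity; this last relation is classical and may be checked on the powers $(t-t_{0})^{\delta-1}$ via the Proposition stated above, or see \cite{Kilbas-book}. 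All the index bookkeeping is then done by two arithmetic identities, $(1-\nu)(1-\mu)+\mu=1-\nu(1-\mu)$ and $\mu+\nu(1-\mu)=\lambda$, the latter giving also $(1-\nu)(1-\mu)=1-\lambda$.

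For part~(1) I would substitute the representation, collapse the two nested integrals by the index law, and recognise $D\,{}_{t_{0}}I_{t}^{1-\nu(1-\mu)}$ as ${}^{RL}_{t_{0}}D^{\nu(1-\mu)}_{t}$:
\begin{align*}
{}_{t_{0}}D^{\mu,\nu}_{t}\big({}_{t_{0}}I^{\mu}_{t}x\big)
&={}_{t_{0}}I_{t}^{\nu(1-\mu)}\,D\,{}_{t_{0}}I_{t}^{(1-\nu)(1-\mu)+\mu}x
={}_{t_{0}}I_{t}^{\nu(1-\mu)}\,D\,{}_{t_{0}}I_{t}^{1-\nu(1-\mu)}x\\
&={}_{t_{0}}I_{t}^{\nu(1-\mu)}\,{}^{RL}_{t_{0}}D^{\nu(1-\mu)}_{t}x.
\end{align*}
The left--inverse relation with $\alpha=\nu(1-\mu)$ then gives $x(t)$ minus a multiple of $\lim_{t\to t_{0}^{+}}{}_{t_{0}}I_{t}^{1-\nu(1-\mu)}x(t)$; since $1-\nu(1-\mu)\geq\mu>0$ and $x$ is continuous on $(t_{0},T]$, that limit is $0$, which proves~(1). (When $\nu=0$ the chain degenerates to $D\,{}_{t_{0}}I^{1}_{t}x=x$ and there is nothing left to prove.)

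For part~(2) the same substitution, now merging the outer ${}_{t_{0}}I^{\mu}_{t}$ with the inner integral and using $\mu+\nu(1-\mu)=\lambda$, $(1-\nu)(1-\mu)=1-\lambda$, gives
\begin{align*}
{}_{t_{0}}I^{\mu}_{t}\big({}_{t_{0}}D^{\mu,\nu}_{t}x\big)
&={}_{t_{0}}I_{t}^{\mu+\nu(1-\mu)}\,D\,{}_{t_{0}}I_{t}^{(1-\nu)(1-\mu)}x
={}_{t_{0}}I^{\lambda}_{t}\,D\,{}_{t_{0}}I_{t}^{1-\lambda}x\\
&={}_{t_{0}}I^{\lambda}_{t}\,{}^{RL}_{t_{0}}D^{\lambda}_{t}x.
\end{align*}
Applying the left--inverse relation with $\alpha=\lambda$ and writing $1-\lambda=(1-\nu)(1-\mu)$ in the boundary term yields
\[
{}_{t_{0}}I^{\mu}_{t}\big({}_{t_{0}}D^{\mu,\nu}_{t}x\big)(t)=x(t)-\frac{(t-t_{0})^{\lambda-1}}{\Gamma(\lambda)}\,{}_{t_{0}}I_{t}^{(1-\nu)(1-\mu)}x(t)\big|_{t=t_{0}},
\]
which is the asserted identity (the exponent $(1-\mu)(1-\mu)$ printed in the statement should read $(1-\mu)(1-\nu)$). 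Here the hypotheses $x\in C(t_{0},T]$ and ${}_{t_{0}}I^{\mu}_{t}x\in C(t_{0},T]$ are precisely what legitimise pulling $D$ through the fractional integrals and what guarantee that the limit defining the boundary term — the weighted initial value $\lim_{t\to t_{0}^{+}}{}_{t_{0}}I^{1-\lambda}_{t}x(t)$ — exists.

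The symbol--pushing (the two arithmetic identities, the index law, the termwise check of the left--inverse relation) is routine. The one point that requires genuine care is the asymmetric fate of the boundary term $\lim_{t\to t_{0}^{+}}{}_{t_{0}}I_{t}^{1-\alpha}x(t)$: in~(1) the exponent $1-\nu(1-\mu)$ is at least $\mu$ and the integrand is the continuous function $x$ itself, so the limit collapses to zero and no initial datum appears, whereas in~(2) the exponent $1-\lambda<1$ produces precisely the nonzero weighted initial value in the statement. Arranging the regularity assumptions so that this term vanishes in~(1) but survives in~(2), while still justifying that $D$ commutes past the integral operators, is the only delicate part; everything else is formal manipulation.
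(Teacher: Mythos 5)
First, a framing point: the paper offers no proof of this lemma at all --- it is quoted (typo included) from Furati, Kassim and Tatar \cite{Hilfer-exist-1} --- so your reconstruction stands on its own. Your route (substitute the factorization ${}_{t_{0}}I_{t}^{\nu(1-\mu)}D\,{}_{t_{0}}I_{t}^{(1-\nu)(1-\mu)}$, merge adjacent fractional integrals by the semigroup law, recognize $D\,{}_{t_{0}}I_{t}^{1-\alpha}$ as ${}^{RL}_{t_{0}}D^{\alpha}_{t}$, and finish with the classical left-inverse identity for ${}_{t_{0}}I_{t}^{\alpha}\,{}^{RL}_{t_{0}}D^{\alpha}_{t}$) is the standard argument. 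Part (2) is carried out correctly: the index arithmetic $\mu+\nu(1-\mu)=\lambda$ and $(1-\nu)(1-\mu)=1-\lambda$ is right, the surviving boundary term is exactly the weighted initial value in the statement, and you are right that the printed exponent $(1-\mu)(1-\mu)$ should read $(1-\nu)(1-\mu)$.

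The gap is in part (1), at the step where you discard the boundary term. You assert $\lim_{t\to t_{0}^{+}}{}_{t_{0}}I_{t}^{1-\nu(1-\mu)}x(t)=0$ ``since $1-\nu(1-\mu)\geq\mu>0$ and $x$ is continuous on $(t_{0},T]$''. Continuity on the half-open interval says nothing about the behaviour of $x$ at $t_{0}$, and the conclusion genuinely fails without such control: take $x(t)=(t-t_{0})^{\nu(1-\mu)-1}$ with $\nu>0$. This $x$ lies in $C(t_{0},T]$, and by the Proposition in Section 2 one has ${}_{t_{0}}I^{\mu}_{t}x=\tfrac{\Gamma(\nu(1-\mu))}{\Gamma(\lambda)}(t-t_{0})^{\lambda-1}$, which the paper's own Corollary says is annihilated by ${}_{t_{0}}D^{\mu,\nu}_{t}$; hence ${}_{t_{0}}D^{\mu,\nu}_{t}({}_{t_{0}}I^{\mu}_{t}x)=0\neq x$, and correspondingly your boundary term is $({}_{t_{0}}I^{1-\nu(1-\mu)}_{t}x)(t_{0}^{+})=\Gamma(\nu(1-\mu))\neq 0$. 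So identity (1) --- and your proof of it --- requires a hypothesis stronger than the one printed, e.g.\ $x$ bounded near $t_{0}$ (continuity on the closed interval) or $x\in C_{1-\lambda}[t_{0},T]$; either forces $({}_{t_{0}}I^{1-\nu(1-\mu)}_{t}x)(t_{0}^{+})=0$ because the resulting exponent $\lambda-\nu(1-\mu)=\mu$ is positive. State which of these you are using instead of attributing the vanishing to continuity on $(t_{0},T]$ alone; with that repair the argument is complete. (The analogous regularity needed in part (2) is genuinely supplied by the extra hypothesis ${}_{t_{0}}I^{\mu}_{t}x\in C(t_{0},T]$ together with the existence of the weighted initial value, as you note.)
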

\section{Existence of mild solution with changed lower bounds of the Hilfer fractional derivative at the impulsive points}
 \subsection{The case when the Impulses are Non-instantaneous}
The initial value problem (IVP) with Hilfer fractional differential equations with non-instantaneous impulses is given by
\begin{align}
\label{stability-NIHrFrDE}
{}_{t_{i}}D^{\mu,\nu}_{t}x(t)&=g(t,x), \enspace t\in(t_{i},p_{i}],\enspace i=0,1,\ldots k,
\end{align}
with weighted impulsive and initial condition,
\begin{eqnarray}
\label{stability-NIImpulse}
\left\{
  \begin{array}{ll}
  x(t)=\phi_{i}(t,x(t),x(p_{i}-0)),\enspace t\in (p_{i},t_{i+1}],\enspace i=0,1,2,\ldots k-1,\\
\lim_{t\rightarrow t_{i}}\big((t-t_{i})^{1-\lambda}x(t)\big)=\phi_{i-1}(t_{i},x(t_{i}),x(p_{i-1}-0)),\enspace i=1,2,\ldots k-1,\\
\lim_{t\rightarrow t_{0}}\big((t-t_{0})^{1-\lambda}x(t)\big)=x_{0}.
  \end{array}
\right.
\end{eqnarray}
Here $x_{0}\in \mathbb{R}^{n}$, $g:\cup_{i=0}^{k}[t_{i},p_{i}]\times \mathbb{R}^{n}\rightarrow \mathbb{R}^{n}$, $\phi_{i}:[p_{i},t_{i+1}]\times \mathbb{R}^{n}\times \mathbb{R}^{n}\rightarrow \mathbb{R}^{n}$ for ($i=0,1,2,\ldots k$).
\begin{definition}
A function $x:[t_{0},T]\rightarrow \mathbb{R}^{n}$ is called a mild solution of the IVP for non-instantaneous Hilfer fractional differential system \eqref{stability-NIHrFrDE}, if it satisfies the following Volterra-algebraic equation
\begin{eqnarray*}\
x(t)=
\left\{
  \begin{array}{ll}
  x_{0}(t-t_{0})^{\lambda-1}+\dfrac{1}{\Gamma(\mu)}\displaystyle\int_{t_{0}}^{t}(t-s)^{\mu-1}g(s,x(s))ds,\enspace t\in(t_{0},p_{0}],\\
  \phi_{i}(t,x(t),x(p_{i}-0)),\enspace t\in(p_{i},t_{i+1}],\enspace i=0,1,2,\ldots k-1,\\
   \phi_{i-1}(t_{i},x(t_{i}),x(p_{i-1}-0))(t-t_{i})^{\lambda-1}\\
   +\dfrac{1}{\Gamma(\mu)}\displaystyle\int_{t_{i}}^{t}(t-s)^{\mu-1}g(s,x(s))ds, \enspace t\in(t_{i},p_{i}],\enspace i=1,2,\ldots k.
  \end{array}
\right.
\end{eqnarray*}
\end{definition}
The following theorem gives the condition for the existence and uniqueness of a mild solution for the fractional system \eqref{stability-NIHrFrDE} with impulsive and initial conditions given in \eqref{stability-NIImpulse}.
\begin{theorem}
\label{thm:Stability-NIHr}
The weighted form of IVP \eqref{stability-NIHrFrDE} has a unique mild solution if the following assumptions are satisfied.
\begin{enumerate}[\rm(1)]
\item
For $x_{1},x_{2} \in \mathbb{R}^{n}$, the function $g:\cup_{i=0}^{k}[t_{i},p_{i}]\times \mathbb{R}^{n}\rightarrow \mathbb{R}^{n}$, $g(t,x)\in \cup_{i=0}^{k}C_{1-\lambda}[t_{i},p_{i}]$ satisfies the inequality,
\begin{align*}
\|g(t,x_{1})-g(t,x_{2})\|\leq L\|x_{1}-x_{2}\|, \enspace \mbox{for}\enspace  L>0 \enspace \forall t\in[t_{i},p_{i}].
\end{align*}
 \item
For $x_{1},x_{2},y_{1},y_{2} \in \mathbb{R}^{n}$, the function $\phi_{i}(t,x,y)\in C[p_{i},t_{i+1}]$  satisfies the inequality,
\begin{align*}
\|\phi_{i}(t,x_{1},y_{1})-\phi_{i}(t,x_{2},y_{2})\|\leq I_{i}\big(\|x_{1}-x_{2}\|+\|y_{1}-y_{2}\|\big),
\end{align*}
for $t\in [p_{i},t_{i+1}]$, and $ I_{i}>0, \enspace (i=0,1,\ldots k-1)$.
\item
The inequality $K<1$  holds where,
\small{
\begin{align*}
K=\max \Bigg(\max_{i=0,1,\ldots k}I_{i},\enspace& \dfrac{L(t-t_{0})^{\mu}}{\Gamma(\mu)}\Big(\frac{1-p}{\lambda-p}\Big)^{1-p}\Big(\dfrac{p}{p+\mu-1}\Big)^{p},\enspace I_{i}+\dfrac{I_{i}}{(p_{i-1}-t_{i-1})^{1-\lambda}}\\
&+\dfrac{L(t-t_{i})^{\mu}}{\Gamma(\mu)}\Big(\frac{1-p}{\lambda-p}\Big)^{1-p}\Big(\dfrac{p}{p+\mu-1}\Big)^{p}\Bigg).
\end{align*}
}
\end{enumerate}
\end{theorem}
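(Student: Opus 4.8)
The plan is to realise a mild solution as a fixed point of the natural solution operator on a Banach space of piecewise weighted-continuous functions, and to show that hypotheses (1)--(3) force this operator to be a contraction with contraction constant $K$; the Banach fixed point theorem then yields existence and uniqueness. Concretely, I would work in
\[
\Omega=\Big\{x:(t_0,T]\to\mathbb{R}^n:\ x|_{(p_i,t_{i+1}]}\in C[p_i,t_{i+1}]\ \text{and}\ (t-t_i)^{1-\lambda}x(t)\big|_{(t_i,p_i]}\in C[t_i,p_i]\Big\},
\]
with the norm $\|x\|_\Omega=\max\big(\max_i\sup_{(p_i,t_{i+1}]}\|x(t)\|,\ \max_i\sup_{(t_i,p_i]}(t-t_i)^{1-\lambda}\|x(t)\|\big)$, under which $\Omega$ is complete. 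Define $\mathcal{T}$ on $\Omega$ by the right-hand side of the Volterra--algebraic system in the definition of mild solution, so that on $(p_i,t_{i+1}]$ one sets $(\mathcal{T}x)(t)=\phi_i(t,x(t),x(p_i-0))$ and on $(t_i,p_i]$ one uses the weighted value at $t_i$ together with the Riemann--Liouville-type integral. Hypothesis (1) and the integral-representation lemma for the weighted Cauchy problem stated in Section~2, together with the continuity of the $\phi_i$ from hypothesis (2), show that $\mathcal{T}(\Omega)\subseteq\Omega$; the only delicate point here is the weighted continuity at the left endpoints $t_i$ of $\frac{1}{\Gamma(\mu)}\int_{t_i}^t(t-s)^{\mu-1}g(s,x(s))\,ds$, which holds because $g(\cdot,x(\cdot))\in C_{1-\lambda}$.

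The core of the argument is the contraction estimate, carried out separately on the three kinds of subinterval for fixed $x,y\in\Omega$. On $(t_0,p_0]$ only the integral term survives; multiplying by the weight $(t-t_0)^{1-\lambda}$, using the Lipschitz bound (1) and the elementary estimate $\|x(s)-y(s)\|\le(s-t_0)^{\lambda-1}\|x-y\|_\Omega$ reduces matters to controlling $\int_{t_0}^t(t-s)^{\mu-1}(s-t_0)^{\lambda-1}\,ds$. I would do this via H\"older's inequality with conjugate exponents $1/p$ and $1/(1-p)$ for a suitable $p$ with $1-\mu<p<\lambda$, which gives the bound $\big(\tfrac{p}{p+\mu-1}\big)^{p}\big(\tfrac{1-p}{\lambda-p}\big)^{1-p}(t-t_0)^{\mu+\lambda-1}$; after the weight this is exactly the second entry of $K$. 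On $(p_i,t_{i+1}]$ hypothesis (2) gives $\|(\mathcal{T}x-\mathcal{T}y)(t)\|\le I_i\big(\|x(t)-y(t)\|+\|x(p_i-0)-y(p_i-0)\|\big)$, where the first term is dominated by $\|x-y\|_\Omega$ directly while the left limit $x(p_i-0)$, sitting in a weighted piece, brings in a factor $(p_i-t_i)^{\lambda-1}$; this reproduces the first entry of $K$ and the cross term appearing in the third. On $(t_i,p_i]$ with $i\ge1$ the two mechanisms combine: after weighting, the impulsive part $\phi_{i-1}(t_i,x(t_i),x(p_{i-1}-0))$ contributes at most $\big(I_i+I_i(p_{i-1}-t_{i-1})^{\lambda-1}\big)\|x-y\|_\Omega$ and the integral part is handled as on $(t_0,p_0]$, giving the third entry of $K$. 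Taking the maximum over the three cases yields $\|\mathcal{T}x-\mathcal{T}y\|_\Omega\le K\|x-y\|_\Omega$.

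Since $K<1$ by hypothesis (3), $\mathcal{T}$ is a contraction on the complete space $\Omega$, so by the Banach fixed point theorem it has a unique fixed point; this fixed point is by construction the unique mild solution of \eqref{stability-NIHrFrDE}--\eqref{stability-NIImpulse}, which is the assertion. I expect the main obstacle to be organisational rather than analytic: correctly tracking the coupled piecewise structure — in particular how the interface values $x(t_i)$ and the left limits $x(p_i-0)$ feed into the estimate on a \emph{later} interval, and how the weight factors $(p_i-t_i)^{\lambda-1}$ then surface — and establishing the precise H\"older constant $\big(\tfrac{1-p}{\lambda-p}\big)^{1-p}\big(\tfrac{p}{p+\mu-1}\big)^{p}$. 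Beyond that it is a routine verification.
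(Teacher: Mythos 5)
Your proposal is correct and follows essentially the same route as the paper: the same weighted piecewise-continuous space, the same solution operator, the same three-case contraction estimate with H\"older's inequality (with exponents $1/p$, $1/(1-p)$ and $1-\mu<p<\lambda$) producing exactly the constant $K$, and the Banach fixed point theorem to conclude. The paper's proof adds only the explicit verification that the operator maps the space into itself, which you also flag as the delicate point.
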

\begin{proof}
The theorem is proved using Banach contraction principle. An operator $\mathcal{G}$ is defined for any function $x\in PC_{1-\lambda}[t_{0},T]$ as
\begin{eqnarray}
\label{eqn:stability-operator}
\mathcal{G}x(t)=
\left\{
  \begin{array}{ll}
  x_{0}(t-t_{0})^{\lambda-1}+\dfrac{1}{\Gamma(\mu)}\displaystyle\int_{t_{0}}^{t}(t-s)^{\mu-1}g(s,x(s))ds,\enspace t\in(t_{0},p_{0}],\\
    \phi_{i}(t,x(t),x(p_{i}-0)),\enspace t\in(p_{i},t_{i+1}],\enspace i=0,1,2,\ldots k-1,\\
   \phi_{i-1}(t_{i},x(t_{i}),x(p_{i-1}-0))(t-t_{i})^{\lambda-1}\\
   +\dfrac{1}{\Gamma(\mu)}\displaystyle\int_{t_{i}}^{t}(t-s)^{\mu-1}g(s,x(s))ds, \enspace t\in(t_{i},p_{i}],\enspace i=1,2,\ldots k.
  \end{array}
\right.
\end{eqnarray}
From the assumption $1$,  given in the theorem statement, it can observed that the operator $\mathcal{G}x(t)$ is well defined. The theorem can be proved by a series of steps.

\underline{Step 1}. To prove that  $\mathcal{G}x(t)\in PC_{1-\lambda}[t_{0},T]$ for $x\in PC_{1-\lambda}[t_{0},T]$.\\
From the definition of the operator $\mathcal{G}$ in \eqref{eqn:stability-operator}, it is obvious that $$\mathcal{G}x(t)\in \cup^{k}_{i=0}C(t_{i},p_{i})\bigcup \cup_{i=0}^{k}C(p_{i},t_{i+1}).$$
For the case $t\in (t_{0},p_{0}]$,
\begin{align*}
(t-t_{0})^{1-\lambda}\mathcal{G}x(t)&=x_{0}+\dfrac{1}{\Gamma(\mu)}\displaystyle\int_{t_{0}}^{t}\dfrac{(t-t_{0})^{1-\lambda}}{(t-s)^{1-\mu}}g(s,x(s))ds.\\
\Longrightarrow &(t-t_{0})^{1-\lambda}\mathcal{G}x(t) \in C(t_{0},p_{0}].
\end{align*}
For the case $t\in (t_{i},p_{i}]$, $i=1,2,\ldots k$,
\begin{align*}
(t-t_{i})^{1-\lambda}\mathcal{G}x(t)&=\phi_{i-1}(t_{i},x_{i}(t),x(p_{i-1}-0))+\dfrac{1}{\Gamma(\mu)}\displaystyle\int_{t_{i}}^{t}\dfrac{(t-t_{i})^{1-\lambda}}{(t-s)^{1-\mu}}g(s,x(s))ds.\\
\Longrightarrow (t-t_{i})^{1-\lambda}\mathcal{G}x(t) &\in C(t_{i},p_{i}].
\end{align*}
Hence proved.

\underline{Step 2.} The proof will be complete if it can be proved that $\mathcal{G}$ is a contraction operator in $PC_{1-\lambda}[t_{0},T]$. Let $x_{1},x_{2}\in  PC_{1-\lambda}[t_{0},T]$. \\
Consider the case $t\in (t_{0},p_{0}]$,
\begin{align*}
\sup_{t\in[t_{0},p_{0}]}&\|(t-t_{0})^{1-\lambda}\mathcal{G}x_{1}(t)-(t-t_{0})^{1-\lambda}\mathcal{G}x_{2}(t)\|\\
&\leq \dfrac{L(t-t_{0})^{1-\lambda}}{\Gamma(\mu)}\int_{t_{0}}^{t}(t-s)^{\mu-1}\|x_{1}(s)-x_{2}(s)\|ds\\
&=\dfrac{L(t-t_{0})^{1-\lambda}}{\Gamma(\mu)}\int_{t_{0}}^{t}\dfrac{(t-s)^{\mu-1}}{(s-t_{0})^{1-\lambda}}
\|(s-t_{0})^{1-\lambda}x_{1}(s)-(s-t_{0})^{1-\lambda}x_{2}(s)\|ds\\
&\leq \|x_{1}-x_{2}\|_{PC_{1-\lambda}[t_{0},T]}\dfrac{L(t-t_{0})^{1-\lambda}}{\Gamma(\mu)}\int_{t_{0}}^{t}\dfrac{(t-s)^{\mu-1}}{(s-t_{0})^{1-\lambda}}ds.
\end{align*}
To proceed further, H$\ddot{o}$lder's inequality is applied and it leads to
\small{
\begin{align*}
&\sup_{t\in[t_{0},p_{0}]}\|(t-t_{0})^{1-\lambda}\mathcal{G}x_{1}(t)-(t-t_{0})^{1-\lambda}\mathcal{G}x_{2}(t)\|\\
&\leq \|x_{1}-x_{2}\|_{PC_{1-\lambda}[t_{0},T]}\dfrac{L(t-t_{0})^{1-\lambda}}{\Gamma(\mu)}\Bigg[\Big(\int_{t_{0}}^{t}(s-t_{0})^{\frac{\lambda-1}{1-p}}ds\Big)^{1-p}
\Big(\int_{t_{0}}^{t}(t-s)^{\frac{\mu-1}{p}}ds\Big)^{p}\Bigg]\\
&= \|x_{1}-x_{2}\|_{PC_{1-\lambda}[t_{0},T]}\dfrac{L(t-t_{0})^{1-\lambda}}{\Gamma(\mu)}\Bigg[\Big(\dfrac{1-p}{\lambda-p}\Big)^{1-p}(t-t_{0})^{\lambda-p}
\Big(\dfrac{p}{p+\mu-1}\Big)^{p}(t-t_{0})^{p+\mu-1}\Bigg]\\
&= \|x_{1}-x_{2}\|_{PC_{1-\lambda}[t_{0},T]}\dfrac{L(t-t_{0})^{\mu}}{\Gamma(\mu)}\Big(\frac{1-p}{\lambda-p}\Big)^{1-p}\Big(\dfrac{p}{p+\mu-1}\Big)^{p}.
\end{align*}
}
\begin{align}
\label{eqn:Stability-K1}
\Longrightarrow\sup_{t\in[t_{0},p_{0}]}\|(t-t_{0})^{1-\lambda}\mathcal{G}x_{1}(t)-(t-t_{0})^{1-\lambda}\mathcal{G}x_{2}(t)\|\leq K\|x_{1}-x_{2}\|_{PC_{1-\lambda}[t_{0},T]}.
\end{align}
For the case $t\in (p_{0},t_{1}]$,
\begin{align*}
\sup_{(p_{0},t_{1}]}\|\mathcal{G}x_{1}(t)-\mathcal{G}x_{2}(t)\|=&\sup_{(p_{0},t_{1}]}\|\phi_{0}\big(t,x_{1}(p_{0}-0)\big)-\phi_{0}\big(t,x_{2}(p_{0}-0)\big)\|\\
\leq &\enspace I_{0}\|x_{1}-x_{2}\|_{PC_{1-\lambda}[t_{0},T]}.
\end{align*}
In a similar way for $t\in (p_{i},t_{i+1}]$,
\begin{align}
\label{eqn:Stability-K2}
\sup_{(p_{i},t_{i+1}]}\|\mathcal{G}x_{1}(t)-\mathcal{G}x_{2}(t)\|=&\sup_{(p_{i},t_{i+1}]}\|\phi_{i}\big(t,x_{1}(p_{i}-0)\big)-\phi_{i}\big(t,x_{2}(p_{i}-0)\big)\|\nonumber\\
\leq & I_{i}\|x_{1}-x_{2}\|_{PC_{1-\lambda}[t_{0},T]}\leq K\|x_{1}-x_{2}\|_{PC_{1-\lambda}[t_{0},T]}.
\end{align}
For the general case $t\in(t_{i},p_{i}]$, the calculation can be proceeded as below:
\begin{align*}
\sup_{t\in (t_{i},p_{i}]}&\|(t-t_{i})^{1-\lambda}\mathcal{G}x_{1}(t)-(t-t_{i})^{1-\lambda}\mathcal{G}x_{2}(t)\|\\
\leq& \|\phi_{i-1}\big(t_{i},x_{1}(t_{i}),x_{1}(p_{i-1}-0)\big)-\phi_{i-1}\big(t_{i},x_{2}(t_{i}),x_{2}(p_{i-1}-0)\big)\|\\
+&\dfrac{1}{\Gamma(\mu)}\int_{t_{i}}^{t}(t-t_{i})^{1-\lambda}(t-s)^{\mu-1}\|g\big(s,x_{1}(s)\big)-g\big(s,x_{2}(s)\big)\|ds\\
\leq& I_{i}\|x_{1}(t_{i})-x_{2}(t_{i})\|\\
+&\dfrac{I_{i}}{(p_{i-1}-t_{i-1})^{1-\lambda}}(p_{i-1}-t_{i-1})^{1-\lambda}\big\|x_{1}(p_{i-1}-0)-x_{2}(p_{i-1}-0)\big\|\\
+& \dfrac{L(t-t_{i})^{1-\lambda}}{\Gamma(\mu)}\int_{t_{i}}^{t}\dfrac{(t-s)^{\mu-1}}{(s-t_{i})^{1-\lambda}}
\big\|(s-t_{i})^{1-\lambda}x_{1}(s)-(s-t_{i})^{1-\lambda}x_{2}(s)\big\|ds.
\end{align*}
From the fact that
\begin{align*}
\|x_{1}(t_{i})-x_{2}(t_{i})\|\leq \sup_{t\in (p_{i},t_{i+1}]}\|x_{1}(t)-x_{2}(t)\|\leq \|x_{1}-x_{2}\|_{PC_{1-\lambda}[t_{0},T]},
\end{align*}
and
\begin{align*}
\big\|(p_{i-1}-t_{i-1})^{1-\lambda}&\big(x_{1}(p_{i-1}-0)-x_{2}(p_{i-1}-0)\big)\big\|\\
&\leq \sup_{t\in (t_{i-1},p_{i-1}]}\big\|(t-t_{i})^{1-\lambda}\big(x_{1}(t)-x_{2}(t)\big)\big\|\leq \|x_{1}-x_{2}\|_{PC_{1-\lambda}[t_{0},T]},
\end{align*}
the following conclusion can be drawn. That is,
\begin{align}
\label{eqn:Stability-K3}
&\sup_{t\in (t_{i},p_{i}]}\|(t-t_{i})^{1-\lambda}\mathcal{G}x_{1}(t)-(t-t_{i})^{1-\lambda}\mathcal{G}x_{2}(t)\|\nonumber\\
\leq&\Bigg[I_{i}+\dfrac{I_{i}}{(p_{i-1}-t_{i-1})^{1-\lambda}}
+\dfrac{L(t-t_{i})^{\mu}}{\Gamma(\mu)}\Big(\frac{1-p}{\lambda-p}\Big)^{1-p}\Big(\dfrac{p}{p+\mu-1}\Big)^{p}\Bigg]\big\|x_{1}-x_{2}\big\|_{PC_{1-\lambda}[t_{0},T]}\nonumber\\
\leq & K\|x_{1}-x_{2}\|_{PC_{1-\lambda}[t_{0},T]}.
\end{align}
From the inequalities \eqref{eqn:Stability-K1}, \eqref{eqn:Stability-K2}, \eqref{eqn:Stability-K3}  and from the assumption (3) of the given theorem hypothesis, the theorem is proved.
\end{proof}
\subsection{The case when the Impulses are Instantaneous}
Consider the IVP of Hilfer fractional differential equations with instantaneous impulses as below:
\begin{align}
\label{stability-IHrFrDE}
{}_{t_{i}}D^{\mu,\nu}_{t}x(t)&=g(t,x), \enspace t\in(t_{i},t_{i+1}],\enspace i=0,1,\ldots k,
\end{align}
with weighted impulsive and initial condition,
\begin{eqnarray*}
\left\{
  \begin{array}{ll}
  x(t)=\psi_{i}\big(t,x(t_{i}-0)\big),\enspace t=t_{i},\enspace i=1,2,\ldots k,\\
\lim_{t\rightarrow t_{i}}\big((t-t_{i})^{1-\lambda}x(t)\big)=\psi_{i}(t_{i},x(t_{i}-0)),\enspace i=1,2,\ldots k,\\
\lim_{t\rightarrow t_{0}}\big((t-t_{0})^{1-\lambda}x(t)\big)=x_{0}.
  \end{array}
\right.
\end{eqnarray*}
Here $x_{0}\in \mathbb{R}^{n}$, $g:\cup_{i=0}^{k}[t_{i},t_{i+1}]\times \mathbb{R}^{n}\rightarrow \mathbb{R}^{n}$ and $\psi_{i}: [t_{o},T]\times \mathbb{R}^{n}\rightarrow \mathbb{R}^{n}$ for ($i=0,1,2,\ldots k$).
\begin{definition}
A function $x:[t_{0},T]\rightarrow \mathbb{R}^{n}$ is called a mild solution of the IVP for instantaneous Hilfer fractional differential system \eqref{stability-IHrFrDE} if it satisfies the following Volterra-algebraic equation
\begin{eqnarray*}
x(t)=
\left\{
  \begin{array}{ll}
   \psi_{i}(t,x(t_{i}-0)),\enspace t=t_{i},\enspace i=1,2,\ldots k,\\
   \psi_{i}(t_{i},x(t_{i}-0))(t-t_{i})^{\lambda-1}\\
   +\dfrac{1}{\Gamma(\mu)}\displaystyle\int_{t_{i}}^{t}(t-s)^{\mu-1}g(s,x(s))ds, \enspace t\in(t_{i},t_{i+1}],\enspace i=1,2,\ldots k,\\
   \psi_{0}(t,x(t_{0}-0))=x_{0}.
  \end{array}
\right.
\end{eqnarray*}
\end{definition}
\begin{remark}
The existence and uniqueness of the mild solution of the system \eqref{stability-IHrFrDE} can be proved in a similar way as Theorem \ref{thm:Stability-NIHr}.
\end{remark}
\section{Stability Analysis}
While analyzing the nonlinear systems, stability is vital, amongst other significant traits in control theory. In 1996, the stability of
linear fractional systems were studied by Matignon \cite{stability-frac} concerning the Caputo derivative. Subsequently, a substantial study has been done by many authors on the stability theory of fractional systems. The work of Li \textit{et al.} \cite{Mittag-frac}  on stability impelled many authors to peruse the study on Mittag-Leffler stability and Lyapunov direct method. The following is the definition of the generalized Mittag-Leffler stability of the solution $x(t)$ of the Hilfer fractional differential system.
\begin{definition}
Let $\Omega\subset\mathbb{R}$ be a domain containing the origin. The solution  is said to be generalized Mittag-Leffler Stable if
\begin{align*}
\|x(t)\|\leq\big[m[{}_{t_{0}}I_{t}^{1-\lambda}x(t_{0})](t-t_{0})^{\lambda-1}E_{\mu,\lambda}\big(-\gamma(t-t_{0})^{\mu}\big)\big]^c,
\end{align*}
where $\mu\in (0,1)$, $\nu\in[0,1]$, $\lambda=\nu+\nu-\mu\nu$, $\gamma\geq 0$, $m(0)=0$, $m(x)\geq 0$ and $m(x)$ is locally Lipschitz with Lipschitz constant $m_{0}$ and ${}_{t_{0}}I_{t}^{1-\lambda}x(t_{0})$ is the integral type initial condition.
\end{definition}
We discuss the stability analysis of the two impulsive system in the subsequent subsections separately.
\label{Stability-stability}
\subsection{For Non-instantaneous Impulsive System}
 For the non-instantaneous impulsive system with Hilfer fractional order, consider the sequences $\{t_{i}\}_{i=1}^{\infty}$, $\{p_{i}\}^{\infty}_{i=0}$, with $0\leq t_{0}<p_{0}<t_{i}<p_{i}<t_{i+1}\leq p_{i+1}$ for $i=1,2,\ldots $ and $\displaystyle\lim_{i\rightarrow \infty}t_{i}=\infty$. The IVP with non-instantaneous impulses and Hilfer order derivative is given by
 \begin{align}
\label{stability-NIHrFrDE-1}
{}_{t_{i}}D^{\mu,\nu}_{t}x(t)&=g(t,x), \enspace t\in(t_{i},p_{i}],\enspace i=0,1,\ldots ,
\end{align}
with weighted impulsive and initial condition,
\begin{eqnarray*}
\left\{
  \begin{array}{ll}
  x(t)=\phi_{i}(t,x(t),x(p_{i}-0)),\enspace t\in (p_{i},t_{i+1}],\enspace i=0,1,2,\ldots ,\\
\lim_{t\rightarrow t_{i}}\big((t-t_{i})^{1-\lambda}x(t)\big)=\phi_{i}(t_{i},x(t_{i}),x(p_{i}-0)),\enspace i=0,1,2,\ldots, \\
\lim_{t\rightarrow t_{0}}\big((t-t_{0})^{1-\lambda}x(t)\big)=x_{0}.
  \end{array}
\right.
\end{eqnarray*}
Here $x_{0}\in \mathbb{R}^{n}$, $g:\cup_{i=0}^{\infty}[t_{i},p_{i}]\times \mathbb{R}^{n}\rightarrow \mathbb{R}^{n}$, $\phi_{i}:[p_{i},t_{i+1}]\times \mathbb{R}^{n}\times \mathbb{R}^{n}\rightarrow \mathbb{R}^{n}$ for ($i=0,1,2,\ldots $). \\
For an arbitrary initial value $\tau\in [t_{i},p_{i})$, $i=0,1,\ldots$, a general Hilfer fractional IVP can be described as
\begin{equation}
\begin{split}
\label{eqn:Hilfer-mittag-imp-gen}
{}_{\tau}D_{t}^{\mu,\nu}x(t)= &g\big(t,x(t)\big), \enspace t\in [\tau,p_{i}],\\
\lim_{t\rightarrow \tau}\big((t-\tau)^{1-\lambda}x(t)\big)=& \widehat{x}_{0}.
\end{split}
\end{equation}
The solution of the IVP of Hilfer fractional non-instantaneous differential system  \eqref{stability-NIHrFrDE-1}, with variable initial condition is given by
\begin{align*}
x(t)=x(t;t_{0},x_{0}) =&\left\{
  \begin{array}{lll}
   X_{i}(t), & t\in (t_{i},p_{i}],\enspace & i=0,1,\ldots\nonumber\\
  \phi_{i}(t,x(t),X_{i}(p_{i}-0)), & t\in (p_{i},t_{i+1}],\enspace & i=0,1,2,\ldots
 \end{array}
\right.
\end{align*}
The following remark is the output of the interval-by-interval analysis of the IVP with the Hilfer fractional derivative given above.
\begin{remark}
\label{rem:Hilfer-mittag-solu}
\begin{enumerate}[\rm(1)]
\item
For $t\in[t_{0},p_{0}]$, the solution $X_{0}(t)$ of the system \eqref{stability-NIHrFrDE-1} coincides with the solution of IVP \eqref{eqn:Hilfer-mittag-imp-gen} for $\tau=t_{0}$, $i=0$ and $\widehat{x}_{0}=x_{0};$
\item
For $t\in(p_{0},t_{1}]$, the solution of the system \eqref{stability-NIHrFrDE-1} satisfies the system
\begin{align*}
x(t;t_{0},x_{0})= \phi_{0}\big(t,x(t;t_{0},x_{0}),X_{1}(p_{0}-0)\big);
\end{align*}
\item
For $t\in(t_{1},p_{1}]$, the solution $X_{1}(t)$ of the system \eqref{stability-NIHrFrDE-1} coincides with the solution of IVP \eqref{eqn:Hilfer-mittag-imp-gen} for $\tau=t_{1}$, $i=1$ and $\widehat{x}_{0}=\phi_{0}\big(t_{1},x(t_{1};t_{0},x_{0}),X_{1}(p_{0}-0)\big);$
\item
For $t\in (p_{1},t_{2}]$, the solution of the system \eqref{stability-NIHrFrDE-1} satisfies the system
\begin{align*}
x(t;t_{0},x_{0})= \phi_{1}\big(t,x(t;t_{0},x_{0}),X_{2}(p_{1}-0)\big);
\end{align*}
\item
For $t\in (t_{2},p_{2}]$, the solution $X_{2}(t)$ of the system \eqref{stability-NIHrFrDE-1} coincides with the solution of IVP \eqref{eqn:Hilfer-mittag-imp-gen} for $\tau=t_{2}$, $i=2$ and $\widehat{x}_{0}=\phi_{1}\big(t_{2},x(t_{2};t_{0},x_{0}),X_{2}(p_{1}-0)\big);$
\end{enumerate}
and so on.
\end{remark}
In general, the solution $x(t)$, $t\geq t_{0}$ satisfies the integral system
\begin{align}
\label{eqn:Hilfer-mittag-non-inst-soln}
x(t) =\left\{
  \begin{array}{ll}
x_{0}(t-t_{0})^{\lambda-1}+\dfrac{1}{\Gamma(\mu)}\displaystyle\int_{t_{0}}^{t}(t-s)^{\mu-1}g(s,x(s))ds,\enspace t\in[t_{0},p_{0}],\\
  \phi_{i}(t,x(t),x(p_{i}-0)),\enspace t\in(p_{i},t_{i+1}],\enspace i=0,1,2,\ldots ,\\
   \phi_{i-1}(t_{i},x(t_{i}),x(p_{i-1}-0))(t-t_{i})^{\lambda-1}\\
   +\dfrac{1}{\Gamma(\mu)}\displaystyle\int_{t_{i}}^{t}(t-s)^{\mu-1}g(s,x(s))ds, \enspace t\in(t_{i},p_{i}],\enspace i=1,2,\ldots .
    \end{array}
\right.
\end{align}
The following definition is proposed, for the zero solution of a non-instantaneous system with Hilfer fractional derivative to be Mittag-Leffler stable.
\begin{definition}
\label{def:stability-NHrFDE}
The zero solution of the non-instantaneous impulsive system \eqref{stability-NIHrFrDE-1} with Hilfer fractional derivative of order $0<\mu<1$ and type $0\leq\nu\leq 1$ is said to be  Mittag-Leffler stable if there exist constants $a,b,h,\gamma$ such that for any initial time $t_{0}\in [0,p_{0})\bigcup_{i=1}^{\infty}[t_{i},p_{i})$ the following inequality holds.

\small{
\begin{align*}
\|x(t)\|\leq
\left\{
  \begin{array}{ll}
  h\|x_{0}\|^{b}\Big[\Big(\displaystyle\prod_{l=0}^{i-1}(p_{l}-t_{l})^{\lambda-1}E_{\mu,\nu}\big(-\gamma(p_{l}-t_{l})^{\mu}\big)\Big)(t-t_{i})^{\lambda-1}E_{\mu,\nu}\big(-\gamma(t-t_{i})^{\mu}\big)\Big]^{\frac{1}{a}},\\
  \hspace{2cm} t\in [t_{i},p_{i}], i=1,2,\ldots\\
  h\|x_{0}\|^{b}\Big[\Big(\displaystyle\prod_{l=0}^{i}(p_{l}-t_{l})^{\lambda-1}E_{\mu,\nu}\big(-\gamma(p_{l}-t_{l})^{\mu})\Big)\Big]^{\frac{1}{a}}, \enspace t\in [p_{i},t_{i+1}],\enspace i=0,1,\ldots.
      \end{array}
\right.
\end{align*}
}
\end{definition}
An example is given below to provide a more detailed view of the solution of the non-instantaneous impulsive system with Hilfer fractional derivative and its particular case reducing to Caputo and Riemann-Liouville derivative.
\begin{example}{\rm{\cite{Mittag-Impulsive-inst-noninst}}}.
Consider the IVP with $\mu=0.4$, $g(t,x)= t$, $t_{i}=i$, $p_{i}=0.5+i$, $i=0,1,\ldots$. Let $\phi_{i}(t,x,y)=t-ix+y$ for $(p_{i},t_{i+1}]$, $i=0,1,2\ldots$ with the solution $x(t)=t-ix(t)+x(p_{i}-0)$ or $x(t)=\dfrac{t+x(p_{i}-0)}{1+i}$ in the interval $(p_{i},t_{i+1}]$.  Let
$$G(t,a)=\displaystyle\int_{a}^{t}\dfrac{g(s,x(s))}{(t-s)^{1-\mu}}ds.$$
 For the given example, let $g(s,x(s))=t$. Hence, $G(t,a)=\displaystyle\int_{a}^{t}\dfrac{s}{(t-s)^{0.6}}ds$ and using Mathematica 12.2, the value of $G(t,a)$ is calculated as below:
\begin{align*}
G(t,a)=-2.46265\times10^{-16} t^{1.4} + 0.714286 a (-a + t)^{(2/5)} + 1.78571 t^{1} (-a + t)^{(2/5)}
\end{align*}
The solution derived in \eqref{eqn:Hilfer-mittag-non-inst-soln}, for this example can be calculated as,
\begin{align*}
x(t) =\left\{
  \begin{array}{ll}
  x_{0}t^{\lambda-1}+\dfrac{G(t,0)}{\Gamma(0.4)}, &t\in (0,0.5],\\
  t+x_{0}(0.5)^{\lambda-1}+\dfrac{G(0.5,0)}{\Gamma(0.4)}, &t\in (0.5,1],\\
  \Big(1+x_{0}(0.5)^{\lambda-1}+\dfrac{G(0.5,0)}{\Gamma(0.4)}\Big)(t-1)^{\lambda-1}+\dfrac{G(t,1)}{\Gamma(0.4)}, &t\in (1,1.5],\\
  \dfrac{1}{2}\Big[\Big(t+1+x_{0}(0.5)^{\lambda-1}+\dfrac{G(0.5,0)}{\Gamma(0.4)}\Big)(0.5)^{\lambda-1}+\dfrac{G(1.5,1)}{\Gamma(0.4)}\Big], &t\in (1.5,2].
    \end{array}
\right.
\end{align*}
Further, for $t\in (2,2.5] $,  the solution reduces to
{\small{
\begin{align*}
 x(t)= \dfrac{1}{2}\Big[\Big(2+1+x_{0}(0.5)^{\lambda-1}+\dfrac{G(0.5,0)}{\Gamma(0.4)}\Big)(0.5)^{\lambda-1}+\dfrac{G(1.5,1)}{\Gamma(0.4)}\Big](t-2)^{\lambda-1}+\dfrac{G(t,2)}{\Gamma(0.4)}, \end{align*}}}
  and so on. For different values of $\nu$, the value of $\lambda$ varies. For $\mu=0.4$ and $\nu=1$ the solutions are same as given in {\rm{\cite{Mittag-Impulsive-inst-noninst}}}. Also for different values of $\lambda$ and $x_{0}=1$, the graph given below provide a clear idea regarding the solution.
\begin{figure}[H]%
     \centering
     \begin{subfigure}{0.45\textwidth}
         \centering
         \includegraphics[scale=.65]{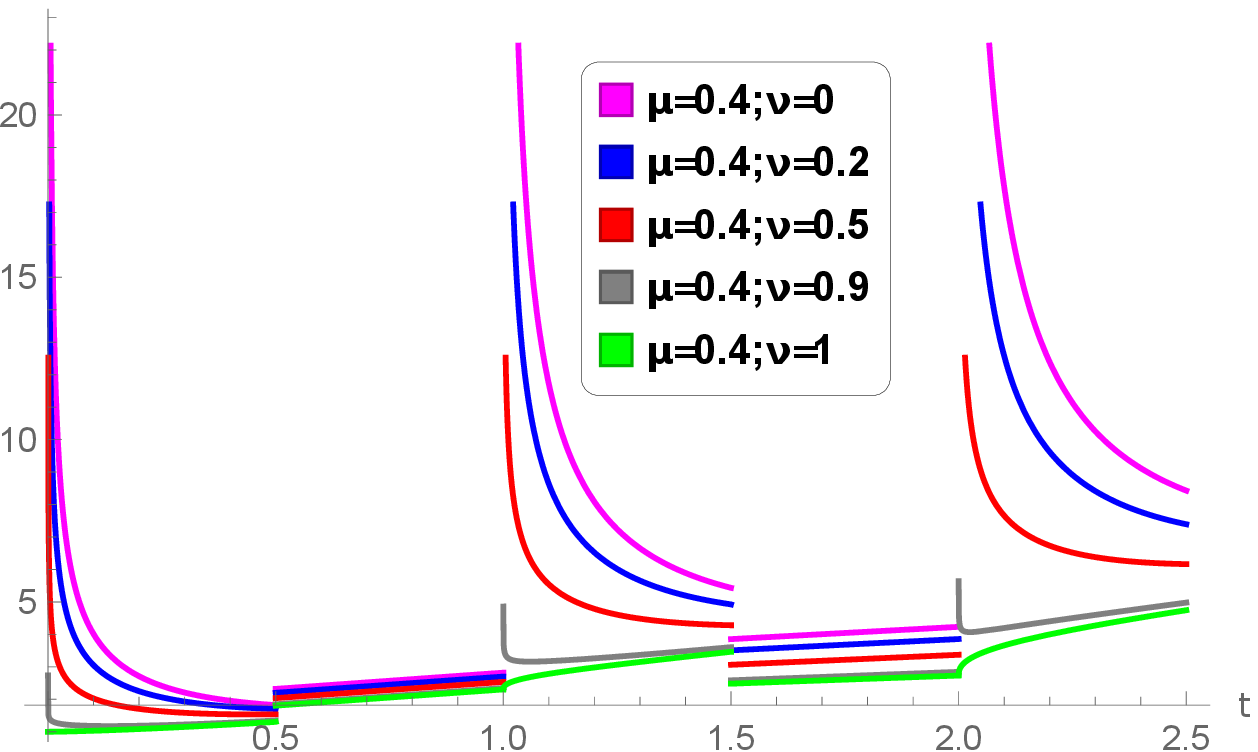}
         \caption{With $\mu=0.4$ and various values of $\nu$. }
         \label{fig1a-Stb}
     \end{subfigure}
     \hfill
     \begin{subfigure}{0.45\textwidth}
         \centering
         \includegraphics[width=\textwidth]{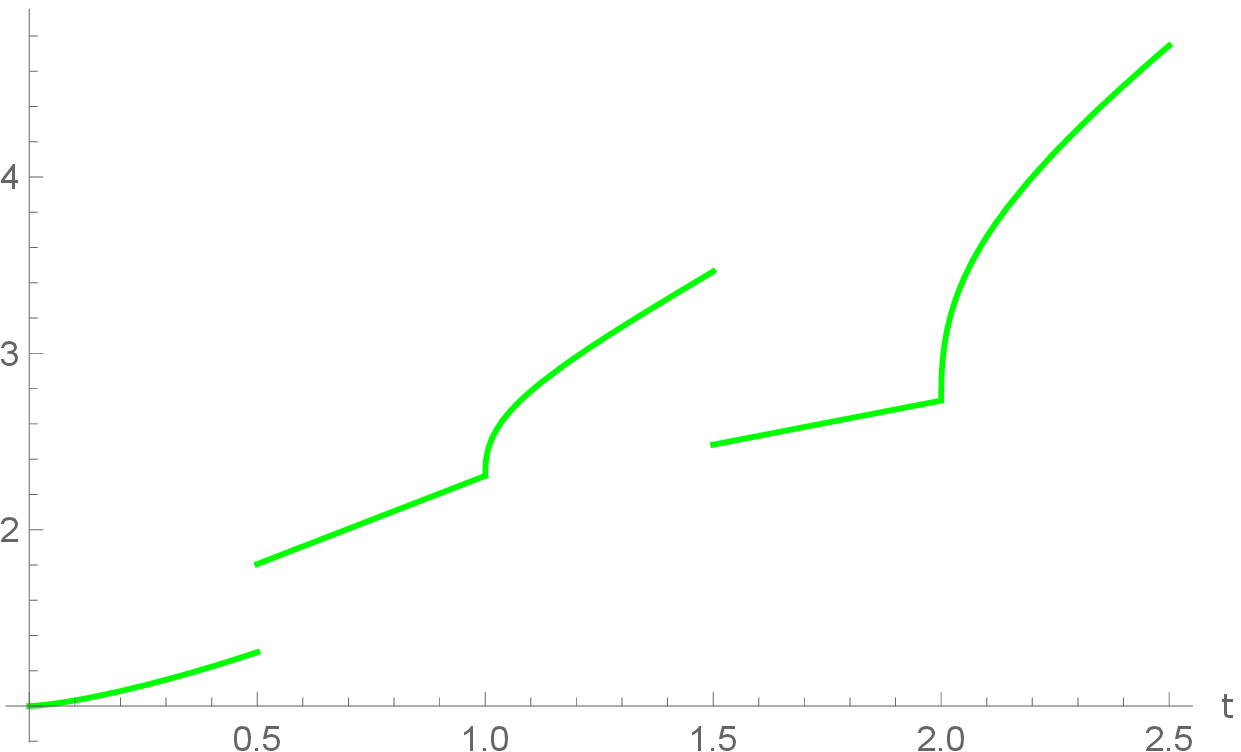}
         \caption{With $\mu=0.4$ and  $\nu=1$ }
         \label{fig1b-Stb}
     \end{subfigure}
        \caption{Caputo as a special case of Hilfer order derivative.}
        \label{fig1-Stb}
\end{figure}
\end{example}
\subsection{For Instantaneous Impulsive System}
For the instantaneous impulsive system with Hilfer fractional order, consider the sequence $\{t_{i}\}_{i=1}^{\infty}$  with $0\leq t_{0}<t_{1}<t_{2}<\ldots<t_{i+1}$ for $i=1,2,\ldots $ and $\displaystyle\lim_{i\rightarrow \infty}t_{i}=\infty$. The IVP with instantaneous impulses and Hilfer fractional derivative is given by
 \begin{align}
\label{stability-IHrFrDE-1}
{}_{t_{i}}D^{\mu,\nu}_{t}x(t)&=g(t,x) \enspace t\in(t_{i},t_{i+1}],\enspace i=0,1,\ldots ,
\end{align}
with weighted impulsive and initial condition,
\begin{eqnarray*}
\left\{
  \begin{array}{ll}
   x(t)=\psi_{i}(t,x(t_{i}-0)),\enspace t=t_{i},\enspace i=1,2,\ldots ,\\
\lim_{t\rightarrow t_{i}}\big((t-t_{i})^{1-\lambda}x(t)\big)=\psi_{i}(t_{i},x(t_{i}-0)),\enspace i=1,2,\ldots, \\
\lim_{t\rightarrow t_{0}}\big((t-t_{0})^{1-\lambda}x(t)\big)=x_{0}.
  \end{array}
\right.
\end{eqnarray*}
Here $x_{0}\in \mathbb{R}^{n}$, $g:\cup_{i=0}^{\infty}[t_{i},t_{i+1}]\times \mathbb{R}^{n}\rightarrow \mathbb{R}^{n}$ and $\psi_{i}:[t_{0},\infty)\times \mathbb{R}^{n}\rightarrow \mathbb{R}^{n}$ for ($i=1,2,\ldots $) \\
The solution of the IVP of Hilfer fractional instantaneous differential system  \eqref{stability-IHrFrDE-1} with variable initial condition is given by
\newline
$x(t)=x(t;t_{0},x_{0})$
\begin{align*}
    =&\left\{
  \begin{array}{lll}
   X_{i}(t), & t\in (t_{i},t_{i+1}],\enspace & i=0,1,\ldots\nonumber\\
  \psi_{i}(t,X_{i}(t_{i}-0)), & t=t_{i},\enspace & i=1,2,\ldots
 \end{array}
\right.
\end{align*}
Here $X_{0}(t)$ for $t\in (t_{0},p_{0}]$ is the solution of the IVP for Hilfer fractional system \eqref{eqn:Hilfer-mittag-imp-gen} with $\tau=t_{0}$ , $\widehat{x}_{0}=x_{0}$ and for $X_{i}(t)$ for $t\in (t_{i},t_{i+1}]$, we have $\widehat{x}_{0}=\psi_{i}(t_{i},X_{i}(t_{i}-0))$.\\
In general, the solution $x(t)$, $t\geq t_{0}$ satisfies the integral system given below.
\begin{align*}
x(t) =\left\{
  \begin{array}{ll}
\psi_{i}(t,x(t_{i}-0)),\enspace t=t_{i},\enspace i=1,2,\ldots ,\\
   \psi_{i}(t_{i},x(t_{i}-0))(t-t_{i})^{\lambda-1}\\
   +\dfrac{1}{\Gamma(\mu)}\displaystyle\int_{t_{i}}^{t}(t-s)^{\mu-1}g(s,x(s))ds, \enspace t\in(t_{i},t_{i+1}],\enspace i=0,1,2,\ldots, \\
   \psi_{0}(t,x(t_{0}-0))=x_{0}.
       \end{array}
\right.
\end{align*}
Similar to Definition \ref{def:stability-NHrFDE}, the following definition is proposed for the zero solution of an instantaneous system with Hilfer fractional derivative to be Mittag-Leffler stable.
\begin{definition}
\label{def:stability-IHrFDE}
    The zero solution of instantaneous impulsive system \eqref{stability-IHrFrDE-1}  with Hilfer fractional derivative of order $0<\mu<1$ and type $0\leq\nu\leq 1$ is called Mittag-Leffler stable, if there exist positive constants $a,b,h,\gamma$  such that for any initial time $t_{0}\in [0,p_{0})\bigcup_{i=1}^{\infty}[t_{i},p_{i})$ the following inequality holds.
\begin{align*}
\|x(t)\|\leq
\left\{
  \begin{array}{ll}
  h\|x_{0}\|^{b}\Big[\Big(\displaystyle\prod_{l=0}^{i-1}(t_{l+1}-t_{l})^{\lambda-1}E_{\mu,\nu}\big(-\gamma(t_{l+1}-t_{l})^{\mu}\big)\Big)(t-t_{i})^{\lambda-1}\\
  E_{\mu,\nu}\big(-\gamma(t-t_{i})^{\mu}\big)\Big]^{\frac{1}{a}},\enspace t\in [t_{i},t_{i+1}],\enspace i=0,1,\ldots\\
   \end{array}
\right.
\end{align*}
\end{definition}

The following lemma will be used in the proof of the main theorem to study the generalized Mittag-Leffler stability for both the impulsive systems.
\begin{lemma}
\label{lem:Hilfer-mittag-impulsive-both}
Assume that
\begin{enumerate}[\rm(1)]
\item
$g(t,0)=0$ for $t\geq 0$.
\item
$V(t,x)$ be a continuously differentiable function defined by
\begin{align*}
V(t,x):\mathbb{R}_{+}\times \Lambda \rightarrow \mathbb{R}_{+},\enspace \Lambda\subset \mathbb{R}^{n},\enspace 0\in \Lambda.
\end{align*}
\item
$V(t,x)$ is locally Lipschitz with respect to the second variable $x$.
\item
$V(t,0)=0$ for $t\in \mathbb{R}_{+}$.
\item
\begin{enumerate}[\rm(i)]
\item
$\alpha_{1}\|x\|^{a} \leq V(t,x) \leq \alpha_{2}\|x\|^{ab}$, for $t\geq \tau$, $x\in \Lambda$.
\item
${}_{\tau}D_{t}^{\mu,\nu}V(t,x(t))\leq -\alpha_{3}\|x(t)\|^{ab}$, for $t\in [\tau, p_{m}]$.
\end{enumerate}
\end{enumerate}
\noindent hold for $\tau \in [t_{m},p_{m})$, $m\geq 0$ , $m$ is an integer, $\mu\in (0,1)$, $\nu \in [0,1]$, $\alpha$, $\alpha_{2}$, $\alpha_{3}$ $a$, $b$,  are arbitrary positive constants, $\widehat{x}_{0}\in \Lambda$ and $x(t)=x(t;\tau,\widehat{x}_{0})\in C_{1-\lambda}^{\lambda}\big([\tau,p_{m}],\Lambda\big)$ is a solution of Hilfer fractional impulsive differential system \eqref{eqn:Hilfer-mittag-imp-gen}. Then
\begin{align*}
V(t,x(t))\leq \big[{}_{\tau}I_{t}^{1-\lambda}V(\tau,x(\tau))\big](t-\tau)^{\lambda-1}E_{\mu,\lambda}\big(\dfrac{-\alpha_{3}}{\alpha_{3}}(t-\tau)^\mu\big),
\enspace t\in [\tau,p_{m}]
\end{align*}
and
\begin{align*}
\|x(t;\tau,\widehat{x}_{0})\|\leq
\|\widehat{x}_{0}\|^{b}\sqrt[a]{h(t-\tau)^{\lambda-1}E_{\mu,\lambda}\big(-\gamma(t-\tau)^\mu\big)}, \enspace t\in [\tau,p_{m}]
\end{align*}
 where $h>0$ for $x(\tau)\neq 0$  and $h=0$ holds if, and only, $x(\tau)=0$.
\end{lemma}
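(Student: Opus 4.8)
The plan is to use the fractional comparison principle for the Hilfer derivative together with the Lyapunov hypotheses in (5). First I would fix $\tau\in[t_m,p_m)$ and the solution $x(t)=x(t;\tau,\widehat{x}_0)$ on $[\tau,p_m]$, and consider $W(t):=V(t,x(t))$, which by (2)--(4) lies in $C_{1-\lambda}[\tau,p_m]$ (and, using that $x$ solves \eqref{eqn:Hilfer-mittag-imp-gen} and the chain-rule-type estimate for Lipschitz $V$, in $C^{\lambda}_{1-\lambda}[\tau,p_m]$). Hypothesis 5(i) gives $\|x(t)\|^{ab}\geq W(t)/\alpha_2$, so 5(ii) yields the scalar fractional differential inequality
\begin{align*}
{}_{\tau}D_{t}^{\mu,\nu}W(t)\leq -\frac{\alpha_3}{\alpha_2}\,W(t),\qquad t\in[\tau,p_m].
\end{align*}
(I suspect the exponent $-\alpha_3/\alpha_3$ and the constant $\gamma$ in the statement are meant to be $-\alpha_3/\alpha_2$; I would carry $\gamma:=\alpha_3/\alpha_2$ throughout.)

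Next I would convert this into an equality by introducing a nonnegative "defect" function: there exists $M(t)\geq 0$ with ${}_{\tau}D_{t}^{\mu,\nu}W(t)=-\gamma W(t)-M(t)$ on $[\tau,p_m]$. Applying the integral representation (the Lemma from \cite{Hilfer-exist-1} quoted in the excerpt, in the weighted form, equivalently Lemma \ref{lem:4-stability}(2)) and then taking Laplace transforms with the Hilfer transform formula from the preliminaries, one gets
\begin{align*}
\widehat{W}(s)=\frac{s^{\nu(\mu-1)}\,\big[{}_{\tau}I^{1-\lambda}_{t}W\big](\tau)}{s^{\mu}+\gamma}-\frac{\widehat{M}(s)}{s^{\mu}+\gamma},
\end{align*}
and inverting, using the Laplace pair $\mathcal{L}\{t^{\lambda-1}E_{\mu,\lambda}(-\gamma t^{\mu})\}=s^{\mu-\lambda}/(s^{\mu}+\gamma)$ together with $\lambda=\mu+\nu-\mu\nu$ so that $\nu(\mu-1)=\lambda-1-\mu\cdot 0$… more precisely matching exponents $\mu-\lambda$ against $\nu(\mu-1)$, I would verify $s^{\nu(\mu-1)}/(s^{\mu}+\gamma)$ corresponds to a multiple of $t^{\lambda-1}E_{\mu,\lambda}(-\gamma t^{\mu})$ and that the $M$-term contributes, via a convolution with a nonnegative Mittag-Leffler kernel $t^{\mu-1}E_{\mu,\mu}(-\gamma t^{\mu})\geq 0$ (valid for $\gamma\geq 0$), a nonpositive correction. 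Dropping that nonpositive term gives exactly
\begin{align*}
W(t)=V(t,x(t))\leq \big[{}_{\tau}I_{t}^{1-\lambda}V(\tau,x(\tau))\big](t-\tau)^{\lambda-1}E_{\mu,\lambda}\big(-\gamma(t-\tau)^{\mu}\big),\quad t\in[\tau,p_m].
\end{align*}

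Finally, for the state estimate I would combine this with the lower bound in 5(i): $\alpha_1\|x(t)\|^a\leq V(t,x(t))$, hence
\begin{align*}
\|x(t)\|\leq\Big(\tfrac{1}{\alpha_1}\big[{}_{\tau}I_{t}^{1-\lambda}V(\tau,x(\tau))\big](t-\tau)^{\lambda-1}E_{\mu,\lambda}(-\gamma(t-\tau)^{\mu})\Big)^{1/a},
\end{align*}
and then use $V(\tau,x(\tau))\leq \alpha_2\|x(\tau)\|^{ab}$ to pull out $\|\widehat x_0\|^{b}=\|x(\tau)\|^{b}$; setting $h:=\alpha_2\,{}_{\tau}I_t^{1-\lambda}(1)\cdot\alpha_1^{-1}$ up to the weight (more carefully, absorbing the weighted-integral of $V$ into a constant $h\geq 0$ that is positive iff $x(\tau)\neq 0$, since $V(\tau,\cdot)$ vanishes only at $0$ by (4) and is bounded below by $\alpha_1\|\cdot\|^a$) gives the claimed
\begin{align*}
\|x(t;\tau,\widehat{x}_{0})\|\leq \|\widehat{x}_{0}\|^{b}\sqrt[a]{h(t-\tau)^{\lambda-1}E_{\mu,\lambda}(-\gamma(t-\tau)^{\mu})}.
\end{align*}
The main obstacle I anticipate is the rigorous justification of the comparison step — i.e., that the defect function $M$ is well-defined, nonnegative, and locally integrable so the Laplace calculus applies, and that the convolution kernel $t^{\mu-1}E_{\mu,\mu}(-\gamma t^{\mu})$ is genuinely nonnegative for all $\gamma\geq 0$; the rest is bookkeeping with the Laplace pairs already listed in Section 2 and the identity $\lambda=\mu+\nu-\mu\nu$.
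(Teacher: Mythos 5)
Your proposal follows essentially the same route as the paper's proof: combine 5(i) and 5(ii) to get ${}_{\tau}D_{t}^{\mu,\nu}V\leq -(\alpha_{3}/\alpha_{2})V$, introduce a nonnegative slack function to turn this into an equality, solve by Laplace transform using the Hilfer transform formula and the Mittag-Leffler pairs, drop the nonnegative convolution term, and then use the two-sided bound in 5(i) to pass to the state estimate. You also correctly diagnose the typo $-\alpha_{3}/\alpha_{3}$ (the paper indeed obtains $-\alpha_{3}/\alpha_{2}$ and sets $\gamma=\alpha_{3}/\alpha_{2}$, $h=\alpha_{2}/(\Gamma(\lambda)\alpha_{1})$), and you are in fact more explicit than the paper about why the slack term is nonnegative and the kernel $t^{\mu-1}E_{\mu,\mu}(-\gamma t^{\mu})$ is nonnegative.
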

\begin{proof}
From the conditions 5-(i) and 5-(ii)  it follows, respectively, that
\begin{align*}
\dfrac{V(t,x(t))}{\alpha_{2}}\leq \|x\|^{ab} \quad \mbox{and} \quad \dfrac{{}_{\tau}D_{t}^{\mu,\nu}V(t,x(t))}{-\alpha_{3}}\geq \|x\|^{ab}.
\end{align*}
Combining both the above inequalities gives
\begin{align*}
{}_{\tau}D_{t}^{\mu,\nu}V(t,x(t))\leq \dfrac{-\alpha_{3}}{\alpha_{2}}V(t,x(t)), \enspace t\in[\tau,p_{m}].
\end{align*}
There exists a function $W(t)\in C([\tau,p_{m}],\mathbb{R})$ such that
\begin{align*}
{}_{\tau}D_{t}^{\mu,\nu}V(t,x(t))+W(t)= \dfrac{-\alpha_{3}}{\alpha_{2}}V(t,x(t)), \enspace t\in[\tau,p_{m}].
\end{align*}
Taking the Laplace transform of the above system for $t\in [\tau,p_{m}]$ gives
\begin{align*}
s^{\mu}V(s)-s^{\nu(\mu-1)}\big[{}_{\tau}I_{t}^{1-\lambda}V(\tau,x(\tau))\big]+W(s)=\dfrac{-\alpha_{3}}{\alpha_{2}}V(s),
\end{align*}
where $V(s)=L\big[V(t,x(t))\big]$, $W(s)=L[W(t)]$. Further simplification leads to
\begin{align*}
V(s)=\dfrac{s^{\nu(\mu-1)}\big[{}_{\tau}I_{t}^{1-\lambda}V(\tau,x(\tau))\big]}{s^{\mu}+\frac{\alpha_{3}}{\alpha_{2}}}
            -\dfrac{W(s)}{s^{\mu}+\frac{\alpha_{3}}{\alpha_{2}}}.
\end{align*}
If  $x(\tau)=0$, then ${}_{\tau}I_{t}^{1-\lambda}V(\tau,x(\tau))=0$ and the solution to the system \eqref{eqn:Hilfer-mittag-imp-gen} becomes zero. If $\tau\neq 0$ then, as $V(t,x(t))$ is locally Lipschitz with respect to the second term, from the existence and uniqueness theorem \cite[Theorem 3.4]{Podlubny-book} and inverse Laplace transform, a unique solution exists and is given as
\begin{align*}
V(t,x(t))=&\big[{}_{\tau}I_{t}^{1-\lambda}V(\tau,x(\tau))\big](t-\tau)^{\lambda-1}E_{\mu,\lambda}\big(\dfrac{-\alpha_{3}}{\alpha_{2}}(t-\tau)^{\mu}\big)\\
        &\quad-W(t)*\big[(t-\tau)^{\lambda-1}E_{\mu,\mu}\big(\dfrac{-\alpha_{3}}{\alpha_{2}}(t-\tau)^{\mu}\big)\big].
\end{align*}
Since $(t-\tau)^{\lambda-1}\geq 0$ and $E_{\mu,\mu}\big(\dfrac{\alpha_{3}}{\alpha_{2}}(t-\tau)^{\mu}\big) \geq 0$, it follows that
\begin{align*}
V(t,x(t))\leq \big[{}_{\tau}I_{t}^{1-\lambda}V(\tau,x(\tau))\big](t-\tau)^{\lambda-1}E_{\mu,\lambda}\big(\dfrac{-\alpha_{3}}{\alpha_{2}}(t-\tau)^{\mu}\big).
\end{align*}
However from condition  5-(i) and Remark \ref{rem:Initial},  it can be concluded that
\begin{align*}
V(t,x(t))&\leq \|\widehat{x}_{0}\|^{ab}\dfrac{\alpha_{2}}{\Gamma(\lambda)} (t-\tau)^{\lambda-1}E_{\mu,\lambda}
\big(\dfrac{-\alpha_{3}}{\alpha_{2}}(t-\tau)^{\mu}\big)\\
\Longrightarrow\quad\qquad\|x\|^{a}&\leq\|\widehat{x}_{0}\|^{ab}\dfrac{\alpha_{2}}{{\Gamma(\lambda)}\alpha_{1}}(t-\tau)^{\lambda-1}E_{\mu,\lambda}
\big(\dfrac{-\alpha_{3}}{\alpha_{2}}(t-\tau)^{\mu}\big)\\
\Longrightarrow \|x(t;\tau,\widehat{x}_{0})\|&\leq \|\widehat{x}_{0}\|^{b}\sqrt[a]{h(t-\tau)^{\lambda-1}E_{\mu,\lambda}\left(-\gamma(t-\tau)^\mu\right)}
\end{align*}
with $h=\dfrac{\alpha_{2}}{{\Gamma(\lambda)}\alpha_{1}}$. This completes the proof of the lemma.
 \end{proof}
\section{Mittag-Leffler Stability of the Hilfer Fractional Differential System with Non-Instantaneous Impulses}
\label{Stability-Mittag-instant}
The main theorem that provides certain sufficient conditions for the Mittag-Leffler stability of the Hilfer fractional non-instantaneous differential
equations is given in this section. The following conditions are assumed to guarantee the existence of the solution $x(t;t_{0},x_{0})$ of the IVP \eqref{stability-NIHrFrDE-1}.
\begin{condition}
 The function $g \in C\big([0,p_{0}]\cup_{i=1}^{\infty}[t_{i},p_{i}]\times \mathbb{R}^{n},\mathbb{R}^{n}\big)$  for
$t\in (0,t_{1})\cup _{i=1}^{\infty}[t_{i},p_{i}] $ with $g(t,0)\equiv 0$ is such that for any initial point
$(\widehat{t}_{0},\widehat{x}_{0})\in [0,p_{0})\cup _{i=1}^{\infty}[t_{i},p_{i}]\times \mathbb{R}^{n}$, the IVP for general Hilfer fractional differential system \eqref{eqn:Hilfer-mittag-imp-gen} with $\tau=\widehat{t}_{0}$ has a solution
$x(t;\widehat{t}_{0},\widehat{x}_{0})\in C_{1-\lambda}^{\lambda}\big([\widehat{t}_{0},p_{m}],\mathbb{R}^{n}\big)$, where $m=min\{i:\widehat{t}_{0}<p_{k}\}$.
\end{condition}
\begin{condition}
 The function $\phi_{i}:[p_{i},t_{i+1}]\times \mathbb{R}^{n}\times \mathbb{R}^{n}\rightarrow \mathbb{R}^{n}$ for any $i=0,1,2,\ldots$ are such that, the system $x=\phi_{i}(t,x,y)$ has a unique solution $x=\zeta_{i}(t,y)$, $t\in[p_{i},t_{i+1}]$. The function $\zeta_{i}$ is defined as
$\zeta_{i}\in C\big([p_{i},t_{i+1}]\times \mathbb{R}^{n},\mathbb{R}^{n}\big)$, with $\zeta_{i}(t,0)\equiv 0$ for $t\in [p_{i},t_{i+1}]$, $i=0,1,2,\ldots$.
\end{condition}
The following theorem provides the condition for the zero solution of Hilfer fractional system with non-instantaneous impulses to satisfy the Mittag-Leffler stable condition:
\begin{theorem}
\label{thm:Stability-Th-1}
Let the assumed conditions 1 and 2 hold. $\Lambda \in \mathbb{R}^{n}$; $0\in \Lambda$. Further let the Lyapunov function $V(t,x)$ be continuously differentiable which is defined by
\begin{align*}
V(t,x):\mathbb{R}_{+}\times \Lambda\rightarrow \mathbb{R}_{+}
\end{align*}
and locally Lipschitz with respect to the second variable along with $V(t,0)=0$ for $t\geq 0$, such that
\begin{enumerate}[\rm(1)]
\item
For $t\geq 0$, $ x\in \mathbb{R}^{n}$,
\begin{align*}
\alpha_{1} \|x\|^{a}\leq V(t,x)\leq \alpha_{2}\|x\|^{ab},
\end{align*}
where $\alpha_{1}$, $\alpha_{2}$, $a$, $b$ are positive constants, with $\alpha_{2}\leq 1$.
\item
For any $\tau \in [0,p_{0})\cup_{k=0}^{\infty}[t_{i},p_{i}] $ and any solution $x(t)\in C_{1-\lambda}^{\lambda}([\tau,p_{m}],\mathbb{R}^{n})$ of fractional system \eqref{eqn:Hilfer-mittag-imp-gen}, the inequality
\begin{align*}
{}_{\tau}D_{t}^{\mu,\nu}V(t,x(t))\leq \alpha_{3} \|x(t)\|^{ab}, \enspace t\in (\tau,p_{m}]
\end{align*}
holds, where $m=min\{i:\tau<p_{i}\}$, $\mu\in (0,1)$, $\nu \in [0,1]$, $\lambda \in (0,1]$, and $\alpha_{3}>0$.
\item
For any $i=0,1,2,\ldots$, the inequality
\begin{align*}
V(t,\zeta_{i}(t,x))\leq \alpha_{4}\|x\|^{a}, \enspace \mbox{for}, \enspace t\in (p_{i},t_{i+1}], \enspace x\in \mathbb{R}^{n}
\end{align*}
holds, where, $\alpha_{4}$ is a positive constant such that $\alpha_{4} \leq \alpha_{1}$.
\end{enumerate}
Then the zero solution of Hilfer fractional differential system \eqref{stability-NIHrFrDE-1} is generalized Mittag-Leffler stable concerning
non-instantaneous impulses.
\end{theorem}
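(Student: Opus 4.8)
The plan is to argue interval by interval, invoking Lemma~\ref{lem:Hilfer-mittag-impulsive-both} on each active interval $[t_i,p_i]$ and Conditions~1--2 together with assumption~(3) on each impulsive window $(p_i,t_{i+1}]$, and then to splice the resulting estimates into the product bound of Definition~\ref{def:stability-NHrFDE} by induction on the impulse index.

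First I would fix the initial time $t_0$, taking $t_0\in[0,p_0)$ for definiteness (a general admissible $t_0\in[t_m,p_m)$ is treated identically by restarting the induction at index $m$). By Remark~\ref{rem:Hilfer-mittag-solu} the restriction of $x(\cdot;t_0,x_0)$ to $[t_0,p_0]$ is the solution of the general weighted Cauchy problem \eqref{eqn:Hilfer-mittag-imp-gen} with $\tau=t_0$ and $\widehat{x}_0=x_0$. Assumptions~(1) and~(2) are precisely conditions 5-(i) and 5-(ii) of Lemma~\ref{lem:Hilfer-mittag-impulsive-both} (the inequality in~(2) read with the Lyapunov-decay sign $-\alpha_3$, as in 5-(ii)), $g(t,0)=0$ is part of Condition~1, and Condition~1 also supplies the regularity $x\in C^{\lambda}_{1-\lambda}$ needed to apply the lemma. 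Hence
\[
\|x(t)\|\le \|x_0\|^{b}\,\sqrt[a]{\,h\,(t-t_0)^{\lambda-1}E_{\mu,\lambda}\!\big(-\gamma(t-t_0)^{\mu}\big)\,},\qquad t\in[t_0,p_0],
\]
with $h=\alpha_2/(\Gamma(\lambda)\alpha_1)$ and $\gamma=\alpha_3/\alpha_2$; in particular $\|x(p_0-0)\|$ is controlled.

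Next I would carry the estimate across an impulsive window $(p_i,t_{i+1}]$. There $x(t)=\zeta_i(t,x(p_i-0))$ by Condition~2, so assumption~(3) gives $V(t,x(t))\le\alpha_4\|x(p_i-0)\|^{a}$, and the lower bound in~(1) yields $\|x(t)\|^a\le(\alpha_4/\alpha_1)\|x(p_i-0)\|^a\le\|x(p_i-0)\|^a$ since $\alpha_4\le\alpha_1$; thus the impulsive map does not increase the norm. By Remark~\ref{rem:Hilfer-mittag-solu} the weighted datum inherited by the next active interval $(t_{i+1},p_{i+1}]$ is $\phi_i\big(t_{i+1},x(t_{i+1}),x(p_i-0)\big)=\zeta_i\big(t_{i+1},x(p_i-0)\big)=x(t_{i+1})$, so Lemma~\ref{lem:Hilfer-mittag-impulsive-both} can be restarted on $(t_{i+1},p_{i+1}]$ with $\tau=t_{i+1}$ and this $\widehat{x}_0$. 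Iterating the alternation of active intervals and impulsive windows, and absorbing the bounded factors $\alpha_4/\alpha_1\le1$ and $\alpha_2\le1$ (so $\gamma\ge\alpha_3$) into $h$, a formal induction on $i$ reproduces the two-case product bound of Definition~\ref{def:stability-NHrFDE}. Finally, $x_0=0$ forces $x\equiv0$ (because $g(t,0)=0$ and $\zeta_i(t,0)=0$), which is the degenerate case, so the zero solution is generalized Mittag-Leffler stable.

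The analytic core — the Laplace-transform comparison turning the fractional differential inequality into a Mittag-Leffler estimate — is already discharged inside Lemma~\ref{lem:Hilfer-mittag-impulsive-both}, so the main obstacle here is the bookkeeping across impulses: correctly identifying at each $t_{i+1}$ the weighted Cauchy datum $\widehat{x}_0$ as the value produced through $\zeta_i$ on the preceding window, and verifying that a single quadruple $(a,b,h,\gamma)$, uniform in the impulse index and in the admissible initial time, survives the chaining so that Definition~\ref{def:stability-NHrFDE} is met. This is exactly where the normalizations $\alpha_2\le1$ and $\alpha_4\le\alpha_1$ do their work, keeping the product of the per-interval constants under control as $i\to\infty$.
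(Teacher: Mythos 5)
Your proposal is correct and follows essentially the same route as the paper: apply Lemma \ref{lem:Hilfer-mittag-impulsive-both} on each active interval $[t_i,p_i]$, transfer the estimate across each window $(p_i,t_{i+1}]$ via conditions (1) and (3) with $\alpha_4\le\alpha_1$, identify the new weighted datum at $t_{i+1}$ through $\zeta_i$, and induct to obtain the product bound of Definition \ref{def:stability-NHrFDE}. You even flag explicitly (the sign of $\alpha_3$ in hypothesis (2), the accumulation of constants such as the $\Gamma(\lambda)$ factors) the points the paper passes over silently, so no further comment is needed.
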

\begin{proof}
Let the arbitrary initial time be $t_{0}$, such that $t_{0}\in [0,p_{0})\cup _{i=1}^{\infty}[t_{i},p_{i}]$. With no loss of generality, let the
initial time be assumed as $t_{0} \in [0,p_{0})$. For the arbitrary initial point $x_{0}\in \mathbb{R}^{n}$, the solution of Hilfer fractional
impulsive system \eqref{stability-NIHrFrDE-1} is considered as $x(t;t_{0},x_{0})$. The stability is proved by the method of induction in the following steps.

\noindent \underline{Step 1.} For the interval $t \in [t_{0},p_{0}]$:\\
The solution $X_{0}(t)$ coincides with the solution of the general Hilfer impulsive system \eqref{eqn:Hilfer-mittag-imp-gen}. Here $\tau=t_{0}$; $i=0$, $\widehat{x}_{0}=x_{0}$. \\
According to Lemma \ref{lem:Hilfer-mittag-impulsive-both}, the solution can be written as,
\begin{align*}
\|x(t;t_{0},x_{0})\|&\leq \|x_{0}\|^{b}\sqrt[a]{\dfrac{\alpha_{2}}{\Gamma(\lambda)\alpha_{1}}(t-t_{0})^{\lambda-1}E_{\mu,\lambda}\big(\dfrac{-\alpha_{3}}{\alpha_{2}}(t-t_{0})^\mu\big)}.
 \end{align*}
Since $\beta\leq 1$, we have
\begin{align}
\label{eqn:Hilfer-mittag-non-inst-solu-step1}
\|x(t;t_{0},x_{0})\|\leq \|x_{0}\|^{b}\sqrt[a]{\dfrac{\alpha_{2}}{\Gamma(\lambda)\alpha_{1}}(t-t_{0})^{\lambda-1}E_{\mu,\lambda}\left(-\alpha_{3}(t-t_{0})^\mu\right)}
\end{align}
\noindent \underline{Step 2.} For the interval $t \in (p_{0},t_{1}]$:
\newline
From condition 1 and 2, for $t=p_{0}-0$, it follows that
\begin{align*}
\alpha_{1}\|x(t;t_{0},x_{0})\|^{a}&\leq V(t,x(t;t_{0},x_{0}))=V(t,x(p_{0}-0;t_{0},x_{0}))\\
&\leq \alpha_{4}\|x(p_{0}-0;t_{0},x_{0})\|^{a}.
\end{align*}
From \eqref{eqn:Hilfer-mittag-non-inst-solu-step1}, the above inequality reduces to,
\begin{align}
\label{eqn:Hilfer-mittag-non-inst-solu-step2}
\alpha_{1}\|x(t;t_{0},x_{0})\|^{a}&\leq \alpha_{4}\|x_{0}\|^{ab}\dfrac{\alpha_{2}}{\Gamma(\lambda)\alpha_{1}}(t-t_{0})^{\lambda-1}E_{\mu,\lambda}\left(-\alpha_{3}(p_{0}-t_{0})^\mu\right)\nonumber\\
\Longrightarrow \qquad \|x(t;t_{0},x_{0})\|&\leq \|x_{0}\|^{b}\sqrt[a]{\dfrac{\alpha_{2}}{\Gamma(\lambda)\alpha_{1}}(t-t_{0})^{\lambda-1}E_{\mu,\lambda}\left(-\alpha_{3}(p_{0}-t_{0})^\mu\right)}, \enspace t\in (p_{0},t_{1}].
\end{align}
\noindent \underline{Step 3.} For the interval $t \in (t_{1},p_{1}]$:
\newline
 The solution $X_{1}(t)=x(t;t_{0},x_{0})$ of the system \eqref{stability-NIHrFrDE-1} coincides with the solution of IVP \eqref{eqn:Hilfer-mittag-imp-gen} for $\tau=t_{1}$, $i=1$ and $\widehat{x}_{0}=x(t_{1};t_{0},x_{0})$. As the problem considered is changeable lower bound, for $\tau=t_{1}$, $\widehat{x}_{0}=X_{1}(t_{1})$, the inequality can be written as,
 \begin{align*}
 V(t,X_{1}(t))&\leq  [{}_{t_{1}}I_{t}^{1-\lambda}V(t_{1},X_{1}(t_{1}))](t-t_{1})^{\lambda-1}E_{\mu,\lambda}\left(-\alpha_{3}(t-t_{1})^{\mu}\right)\\
 &= [{}_{t_{1}}I_{t}^{1-\lambda} V(t_{1},x(t_{1};t_{0},x_{0}))](t-t_{1})^{\lambda-1}E_{\mu,\lambda}\left(-\alpha_{3}(t-t_{1})^{\mu}\right)\\
  &= [{}_{t_{1}}I_{t}^{1-\lambda} V(t_{1},\zeta_{0}(t_{1},x(p_{0}-0;t_{0},x_{0}))(t-t_{1})^{\lambda-1}E_{\mu,\lambda}\left(-\alpha_{3}(t-t_{1})^{\mu}\right)\\
  &\leq \dfrac{\alpha_{4}}{\Gamma(\lambda)}\|x(p_{0}-0;t_{0},x_{0})\|^{a}(t-t_{1})^{\lambda-1}E_{\mu,\lambda}\left(-\alpha_{3}(t-t_{1})^{\mu}\right).
 \end{align*}
 From Condition 1 and bound \eqref{eqn:Hilfer-mittag-non-inst-solu-step2} the below given inequality can be derived.
  \begin{align*}
 \alpha_{1}\|x(t;t_{0},x_{0})\|^{a}\leq &\enspace\dfrac{\alpha_{4}}{\Gamma(\lambda)}\|x_{0}\|^{ab}\dfrac{\alpha_{2}}{\Gamma(\lambda)\alpha_{1}}(p_{0}-t_{0})^{\lambda-1}E_{\mu,\lambda}
 \left(-\alpha_{3}(p_{0}-t_{0})^\mu\right)\\
&\qquad (t-t_{1})^{\lambda-1}E_{\mu,\lambda}\left(-\alpha_{3}(t-t_{1})^{\mu}\right).
\end{align*}
Thus from the above inequality it can be concluded that
{\small{
\begin{align*}
\noindent &\|x(t;t_{0},x_{0})\| \nonumber\\
\leq &\, \|x_{0}\|^{b}\sqrt[a]{\dfrac{\alpha_{2}}{(\Gamma(\lambda))^{2}\alpha_{1}}(p_{0}-t_{0})^{\lambda-1}E_{\mu,\lambda}
 \big(-\alpha_{3}(p_{0}-t_{0})^\mu\big)(t-t_{1})^{\lambda-1}E_{\mu,\lambda}\big(-\alpha_{3}(t-t_{1})^{\mu}\big)}
 \end{align*}}}
\noindent \underline{Step 4.} For the interval $t \in (p_{1},t_{2}]$:
\newline
An inequality can be derived using the condition 1 and 2, in a similar way, which is given as follows.
{\small{
 \begin{align*}
\noindent\Longrightarrow &\|x(t;t_{0},x_{0})\| \nonumber\\
\leq & \|x_{0}\|^{b}\sqrt[a]{\dfrac{\alpha_{2}}{(\Gamma(\lambda))^{2}\alpha_{1}}(p_{0}-t_{0})^{\lambda-1}E_{\mu,\lambda}
 \big(-\alpha_{3}(p_{0}-t_{0})^\mu\big)(p_{1}-t_{1})^{\lambda-1}E_{\mu,\lambda}\big(-\alpha_{3}(p_{1}-t_{1})^{\mu}\big)}.
 \end{align*}}}
Extending this procedure for further intervals confirms that the zero solution of the given system \eqref{stability-NIHrFrDE-1} is Mittag-Leffler stable with $h=\sqrt[a]{\dfrac{\alpha_{2}}{(\Gamma(\lambda))^{i+1}\alpha_{1}}}$ and $\alpha_{3}=\gamma$.
\end{proof}
\section{Mittag-Leffler Stability of the Hilfer Fractional Differential System with Instantaneous Impulses}
\label{Stability-Mittag-non-instant}
The main theorem that ascertains certain sufficient conditions for the Mittag-Leffler stability of the Hilfer fractional instantaneous differential
system is given in this section. The following conditions are assumed to guarantee the existence of solution $x(t;t_{0},x_{0})$ of the IVP  \eqref{stability-IHrFrDE-1}.
\begin{condition}
 The function $g \in C\big([0,\infty)/\{t_{i}\}\times \mathbb{R}^{n},\mathbb{R}^{n}\big)$  for
$t\neq t_{i} $ with $g(t,0)\equiv 0$ is such that for any initial point
$(\widehat{t}_{0},\widehat{x}_{0})\in [0,\infty)/\{t_{i}\}\times \mathbb{R}^{n}$, the IVP for general Hilfer fractional differential system \eqref{eqn:Hilfer-mittag-imp-gen} with $\tau=\widehat{t}_{0}$ has a solution
$x(t;\widehat{t}_{0},\widehat{x}_{0})\in C_{1-\lambda}^{\lambda}\big([\widehat{t}_{0},t_{m}],\mathbb{R}^{n}\big)$, where $m=min\{l:\widehat{t}_{0}<t_{i}\}$.
\end{condition}
\begin{condition}
 The function $\psi_{i}$ is defined as $\psi_{i}:[t_{0},\infty)\times \mathbb{R}^{n}\rightarrow \mathbb{R}^{n}$  with $\psi_{i}(t_{i},0)\equiv 0$ for any $i=1,2,\ldots$ .
 \end{condition}
The following theorem provides the condition for the zero solution of Hilfer fractional system with non-instantaneous impulses to satisfy the Mittag-Leffler stable condition.
\begin{theorem}
Let the assumed conditions 3 and 4 hold. $\Lambda\in \mathbb{R}^{n}$; $0\in \Lambda$. Further, let the Lyapunov function $V(t,x)$ be continuously differentiable which is defined by
\begin{align*}
V(t,x):\mathbb{R}_{+}\times \Lambda\rightarrow \mathbb{R_{+}}
\end{align*}
and locally Lipschitz with respect to the second variable along with $V(t,0)=0$ for $t\geq 0$, such that
\begin{enumerate}[\rm(1)]
\item
For $ t\geq 0$ and $x\in \Lambda$,
\begin{align*}
\alpha_{1} \|x\|^{a}\leq V(t,x)\leq \alpha_{2}\|x\|^{ab},
\end{align*}
where $\alpha_{1}$, $\alpha_{2}$, $a$, $b$ are positive constants, with $\alpha_{2}\leq 1$.
\item
For any $\tau \neq t_{i} $, $i=1,2,\ldots$ and for the solution $x(t)\in C_{1-\lambda}^{\lambda}([\tau,t_{m}],\mathbb{R}^{n})$ of fractional system \eqref{eqn:Hilfer-mittag-imp-gen}, the inequality
\begin{align*}
{}_{\tau}D_{t}^{\mu,\nu}V(t,x(t))\leq \alpha_{3} \|x(t)\|^{ab}, \enspace t\in (\tau,t_{m}]
\end{align*}
holds, where $m=min\{i:\tau<t_{i}\}$, $\mu\in (0,1)$, $\nu \in [0,1]$, $\lambda \in (0,1]$ and $\gamma >0$.
\item
For any $i=1,2,\ldots$, the inequality
\begin{align*}
V(t_{i},\psi_{i}(t,x))\leq \alpha_{4}\|x\|^{a}, \enspace x\in \mathbb{R}^{n}
\end{align*}
holds, where, $\alpha_{4}$ is a positive constant such that $\alpha_{4}\leq \alpha_{1}$.
\end{enumerate}
Then the zero solution of Hilfer fractional differential system \eqref{stability-IHrFrDE-1} is generalized Mittag-Leffler stable with respect to instantaneous impulses.
\end{theorem}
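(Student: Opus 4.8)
The plan is to run exactly the scheme of Theorem~\ref{thm:Stability-Th-1}, with the non-instantaneous map $\phi_i$ (and its resolved form $\zeta_i$) replaced by the instantaneous jump $\psi_i$, the alternating pieces $[t_i,p_i]$, $(p_i,t_{i+1}]$ replaced by the single smooth pieces $(t_i,t_{i+1}]$ carrying a jump only at the left endpoint, and Definition~\ref{def:stability-NHrFDE} replaced by the target estimate of Definition~\ref{def:stability-IHrFDE}. Fix $t_0$; without loss of generality $t_0\in[0,t_1)$, fix $x_0\in\Lambda$, and write $x(t)=x(t;t_0,x_0)$. On $(t_i,t_{i+1}]$ the restriction $X_i$ of $x$ coincides with the solution of the general Hilfer IVP~\eqref{eqn:Hilfer-mittag-imp-gen} with $\tau=t_i$ and weighted datum $\widehat x_0=\psi_i\big(t_i,x(t_i-0)\big)$ (with $\widehat x_0=x_0$ when $i=0$); Conditions~3 and 4 guarantee that this solution exists in $C^{\lambda}_{1-\lambda}\big([t_i,t_{i+1}],\mathbb{R}^n\big)$ and that the left limits $x(t_i-0)$ are well defined, so Lemma~\ref{lem:Hilfer-mittag-impulsive-both} is applicable on each piece, hypotheses (1) and (2) of the theorem furnishing respectively the two-sided estimate 5-(i) and the fractional Lyapunov inequality 5-(ii). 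The whole argument is then an induction over $i$.

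\emph{Base case} ($i=0$, $t\in[t_0,t_1]$). Lemma~\ref{lem:Hilfer-mittag-impulsive-both} gives directly
\begin{align*}
\|x(t;t_0,x_0)\|\le \|x_0\|^{b}\sqrt[a]{\frac{\alpha_2}{\Gamma(\lambda)\,\alpha_1}\,(t-t_0)^{\lambda-1}E_{\mu,\lambda}\Big(-\tfrac{\alpha_3}{\alpha_2}(t-t_0)^{\mu}\Big)} .
\end{align*}
Since $\alpha_2\le 1$, so that $\alpha_3/\alpha_2\ge\alpha_3$, and since (using $\lambda\ge\mu$) $y\mapsto E_{\mu,\lambda}(-y)$ is nonnegative and nonincreasing on $[0,\infty)$, one may enlarge the right-hand side by replacing $-\alpha_3/\alpha_2$ by $-\alpha_3$; as $\Gamma(\lambda)\ge 1$ for $\lambda\in(0,1]$, this is the bound of Definition~\ref{def:stability-IHrFDE} on $[t_0,t_1]$ with $h:=\sqrt[a]{\alpha_2/\alpha_1}$ and $\gamma:=\alpha_3$.

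\emph{Inductive step}. Assume the estimate of Definition~\ref{def:stability-IHrFDE} holds on $[t_i,t_{i+1}]$; in particular it bounds $\|x(t_{i+1}-0)\|^a$. At the impulse instant the state becomes $\widehat x_0=\psi_{i+1}\big(t_{i+1},x(t_{i+1}-0)\big)$, and hypothesis (3) together with $\alpha_4\le\alpha_1$ gives $V\big(t_{i+1},\psi_{i+1}(t_{i+1},x(t_{i+1}-0))\big)\le\alpha_4\|x(t_{i+1}-0)\|^{a}$. Applying Lemma~\ref{lem:Hilfer-mittag-impulsive-both} on $(t_{i+1},t_{i+2}]$ and invoking Remark~\ref{rem:Initial} to pass from the integral-type initial value $\big[{}_{t_{i+1}}I_t^{1-\lambda}V(t_{i+1},x(t_{i+1}))\big]$ to a constant multiple $\tfrac{\alpha_4}{\Gamma(\lambda)}\|x(t_{i+1}-0)\|^{a}$, we obtain
\begin{align*}
V(t,x(t))\le\frac{\alpha_4}{\Gamma(\lambda)}\,\|x(t_{i+1}-0)\|^{a}\,(t-t_{i+1})^{\lambda-1}E_{\mu,\lambda}\big(-\alpha_3(t-t_{i+1})^{\mu}\big),\qquad t\in(t_{i+1},t_{i+2}].
\end{align*}
Combining the lower bound $\alpha_1\|x(t)\|^a\le V(t,x(t))$ from hypothesis (1) with the inductive estimate for $\|x(t_{i+1}-0)\|^a$ (and $\alpha_4\le\alpha_1$) yields, on $[t_{i+1},t_{i+2}]$, exactly the estimate of Definition~\ref{def:stability-IHrFDE} with one extra Mittag-Leffler factor $(t_{i+1}-t_i)^{\lambda-1}E_{\mu,\lambda}(-\alpha_3(t_{i+1}-t_i)^{\mu})$ in the product and one extra power of $\Gamma(\lambda)$ in the denominator of the prefactor. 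Since $\Gamma(\lambda)\ge 1$ for $\lambda\in(0,1]$, the resulting constant $\sqrt[a]{\alpha_2/(\Gamma(\lambda)^{\,i+1}\alpha_1)}$ remains $\le h$ uniformly in $i$, and with $\gamma=\alpha_3$ the induction closes and the zero solution is generalized Mittag-Leffler stable with respect to instantaneous impulses.

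The routine parts are the bookkeeping of the two-sided bounds and the telescoping of the Mittag-Leffler factors. The delicate point, as in Theorem~\ref{thm:Stability-Th-1}, is the transition across a jump: one must make precise the step in which the integral-type initial value $\big[{}_{t_{i+1}}I_t^{1-\lambda}V(t_{i+1},x(t_{i+1}))\big]$ of the Lyapunov function on the new interval is dominated, via hypothesis (3) and Remark~\ref{rem:Initial}, by a fixed multiple of $\|x(t_{i+1}-0)\|^{a}$, and one must verify that the $i$-dependent prefactor stays uniformly bounded --- which is exactly where $\alpha_2\le 1$, $\alpha_4\le\alpha_1$ and $\Gamma(\lambda)\ge 1$ (valid because $\lambda\in(0,1]$) are used. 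A secondary point is to confirm at each step that Lemma~\ref{lem:Hilfer-mittag-impulsive-both} genuinely applies, i.e. that $x(t_i-0)$ exists and $X_i\in C^{\lambda}_{1-\lambda}$ on each piece, which is precisely the role of Conditions~3 and 4.
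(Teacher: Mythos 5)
Your proposal is correct and follows exactly the route the paper intends: the paper's own proof of this theorem is a one-line deferral to the interval-by-interval induction of Theorem~\ref{thm:Stability-Th-1}, and you have simply written out that induction in the instantaneous setting (base case via Lemma~\ref{lem:Hilfer-mittag-impulsive-both}, jump handled by hypothesis (3), telescoping of the Mittag-Leffler factors). Your explicit justifications of the minor points the paper glosses over ($\alpha_2\le 1$ to pass from $-\alpha_3/\alpha_2$ to $-\alpha_3$, and $\Gamma(\lambda)\ge 1$ for $\lambda\in(0,1]$ to keep the prefactor uniform in $i$) are accurate and consistent with the computations in the non-instantaneous case.
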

\begin{proof}
Let the arbitrary initial time with no loss of generality be assumed as $t_{0}\in[0,t_{1})$. For the arbitrary initial point $x_{0}\in \mathbb{R}^{n}$, with the initial time $t_{0}$, the solution is given by $x(t;t_{0},x_{0})$. As in Theorem \ref{thm:Stability-Th-1}, the proof of this theorem can be carried out interval by interval and using induction it can be extended to a general interval.
\end{proof}
\section{concluding remarks}
Mittag-Leffler stability condition for systems with both instantaneous impulses and non-instantaneous impulses having Hilfer fractional order is discussed in detail. By varying the value of $\nu$, we can interpolate the results on the stability of the solution of the system \eqref{stability-NIHrFrDE-1} and \eqref{stability-IHrFrDE-1} between Caputo and Riemann-Liouville fractional operators. Moreover, the mild solution of impulsive systems with Hilfer fractional derivative with changeable initial conditions has not been studied so far. Further, in most of the dynamical systems, delay plays an effective role for the loss in stability that degrades the performance of the system. In many applications, especially, in the field of communications and network exchange, time delay system play a major role (see, for example \cite{Stability-delay}). The stability analysis of impulsive systems with Hilfer fractional order with time delay can be considered as an immediate future problem based on this paper.

%

\section*{Authors' contribution}
The authors contributed equally to this article.


\providecommand{\bysame}{\leavevmode\hbox to3em{\hrulefill}\thinspace}
\providecommand{\MR}{\relax\ifhmode\unskip\space\fi MR }
\providecommand{\MRhref}[2]{%
  \href{http://www.ams.org/mathscinet-getitem?mr=#1}{#2}
}
\providecommand{\href}[2]{#2}
\begin{thebibliography}{}

\end{thebibliography}


\begin{thebibliography}{99}

\bibitem{Impulsive-non-instant-book}
R. Agarwal, S. Hristova\ and\ D. O'Regan, {\it Non-instantaneous impulses in differential equations}, Springer, Cham, 2017.

\bibitem{Mittag-Impulsive-inst-noninst}
R. Agarwal, S. Hristova\ and\ D. O'Regan, Mittag-Leffler stability for impulsive Caputo fractional differential equations, Differ. Equ. Dyn. Syst. {\bf 29} (2021), no.~3, 689--705.

\bibitem{Hilfer-delay}
H. M. Ahmed, M. M. El-Borai, H. M. El-Owaidy \and\ A. S. Ghanem, Impulsive Hilfer fractional differential equations, Adv. Difference Equ. {\bf 2018}, Paper No. 226, 20 pp.

\bibitem{Hilfer-impulsive}
J. Du, W. Jiang\ and\ A. U. K. Niazi, Approximate controllability of impulsive Hilfer fractional differential inclusions,
J. Nonlinear Sci. Appl. {\bf 10} (2017), no.~2, 595--611.

\bibitem{Hilfer-exist-1}
K. M. Furati, M. D. Kassim\ and\ N. Tatar, Existence and uniqueness for a problem involving Hilfer fractional derivative,
Comput. Math. Appl. {\bf 64} (2012), no.~6, 1616--1626.

\bibitem{Hilfer-Impulsive-nonlocal}
H. Gou\ and\ Y. Li, A Study on Impulsive Hilfer Fractional Evolution Equations with Nonlocal Conditions,
Int. J. Nonlinear Sci. Numer. Simul. {\bf 21} (2020), no.~2, 205--218.

\bibitem{Hilfer-remark}
H. Gu\ and\ J. J. Trujillo, Existence of mild solution for evolution equation with Hilfer fractional derivative,
Appl. Math. Comput. {\bf 257} (2015), 344--354.

\bibitem{non-instant}
E. Hern\'{a}ndez\ and\ D. O'Regan, On a new class of abstract impulsive differential equations,
Proc. Amer. Math. Soc. {\bf 141} (2013), no.~5, 1641--1649.

\bibitem{Hilfer-glass}
R. Hilfer, Fractional time evolution, in {\it Applications of fractional calculus in physics}, 87--130, World Sci. Publ., River Edge, NJ, 2000

\bibitem{Kilbas-book}
A. A. Kilbas, H. M. Srivastava\ and\ J. J. Trujillo, {\it Theory and applications of fractional differential equations}, North-Holland Mathematics Studies, 204, Elsevier Science B.V., Amsterdam, 2006.

\bibitem{LDM}
X. Li, S. Liu\ and\ W. Jiang, $q$-Mittag-Leffler stability and Lyapunov direct method for differential systems with $q$-fractional order,
Adv. Difference Equ. {\bf 2018}, Paper No. 78, 9 .

\bibitem{Mittag-frac}
Y. Li, Y. Chen\ and\ I. Podlubny,
Mittag-Leffler stability of fractional order nonlinear dynamic systems, Automatica J. IFAC {\bf 45} (2009), no.~8, 1965--1969.

\bibitem{stability-frac}
D. Matignon, Stability results for fractional differential equations with applications to control processing, Proceedings of the IMACS-SMC. {\bf 2} (1996), 963–-968.

\bibitem{Podlubny-book}
I. Podlubny, {\it Fractional differential equations}, Mathematics in Science and Engineering, 198, Academic Press, Inc., San Diego, CA, 1999.

\bibitem{Frac-PID}
I. Podlubny, Fractional-order systems and $PI^\lambda D^\mu$-controllers, IEEE Trans. Automat. Control {\bf 44} (1999), no.~1, 208--214.

\bibitem{Stability-Ren}
J. Ren\ and\ C. Zhai, Stability analysis for generalized fractional differential systems and applications, Chaos Solitons Fractals {\bf 139} (2020), 110009, 13 pp.

\bibitem{Stability-delay}
J. Ren\ and\ C. Zhai, Stability analysis of generalized neutral fractional differential systems with time delays, Appl. Math. Lett. {\bf 116} (2021), Paper No. 106987, 8 pp.

\bibitem{Hilfer-stability}
H. Rezazadeh, H. Aminikhah\ and\ A. H. Refahi Sheikhani, Stability analysis of Hilfer fractional differential systems,
Math. Commun. {\bf 21} (2016), no.~1, 45--64.

\bibitem{Mittag-Impulsive}
I. M. Stamova, Mittag-Leffler stability of impulsive differential equations of fractional order,
Quart. Appl. Math. {\bf 73} (2015), no.~3, 525--535. .

\bibitem{Impulsive-instant-book}
I. M. Stamova\ and\ G. Tr. Stamov, {\it Functional and impulsive differential equations of fractional order}, CRC Press, Boca Raton, FL, 2017.

\bibitem{G.Mittag-Hilfer}
G.  Wang,  J.  Qin,  H.  Dong, \ and\ T.  Guan,  Generalized  Mittag-Leffler  stability  of  Hilfer  fractional  order  nonlinear  dynamicsystem,
Mathematics. {\bf 7}. (2019), 500.

\end{thebibliography}
\end{document}